\documentclass[letterpaper, 11pt,  reqno]{amsart}

\usepackage[margin=1.1in,marginparwidth=1.5cm, marginparsep=0.5cm]{geometry}

\usepackage{amsmath,amssymb,amscd,amsthm,amsxtra, esint, stmaryrd,mathtools}
\usepackage{nccmath} 

\usepackage{mathrsfs} 

\usepackage[implicit=true]{hyperref}

\usepackage{color} 
\usepackage{ulem}
\usepackage[makeroom]{cancel}

\allowdisplaybreaks[2]

\definecolor{gr}{rgb}   {0.,   0.69,   0.23 }
\definecolor{bl}{rgb}   {0.,   0.5,   1. }
\definecolor{mg}{rgb}   {0.85,  0.,    0.85}
\definecolor{yl}{rgb}   {0.8,  0.7,   0.}
\definecolor{or}{rgb}  {0.7,0.2,0.2}

\newtheorem{theorem}{Theorem} [section]

\newtheorem{lemma}[theorem]{Lemma}
\newtheorem{proposition}[theorem]{Proposition}
\newtheorem{remark}[theorem]{Remark}

\DeclareMathOperator{\Id}{Id}

\newcommand{\noi}{\noindent}
\newcommand{\Z}{\mathbb{Z}}
\newcommand{\R}{\mathbb{R}}

\newcommand{\T}{\mathbb{T}}

\let\P= \undefined
\newcommand{\P}{\mathbf{P}}
\newcommand{\prob}{\mathbb{P}}

\newcommand{\E}{\mathbb{E}}

\newcommand{\al}{\alpha}
\newcommand{\be}{\beta}

\newcommand{\eps}{\varepsilon}

\newcommand{\g}{\gamma}
\newcommand{\G}{\Gamma}

\newcommand{\Ld}{\Lambda}
\newcommand{\s}{\sigma}

\newcommand{\ft}{\widehat}

\newcommand{\wt}{\widetilde}
\newcommand{\cj}{\overline}
\newcommand{\dx}{\partial_x}
\newcommand{\dt}{\partial_t}

\newcommand{\ta}{\theta}

\renewcommand{\l}{\ell}
\renewcommand{\o}{\omega}
\renewcommand{\O}{\Omega}

\newcommand{\les}{\lesssim}
\newcommand{\ges}{\gtrsim}

\newcommand{\jb}[1]
{\langle #1 \rangle}
\def\e{\varepsilon}

\newcommand{\N}{\mathbb{N}}

\newcommand{\NN}{\mathcal{N}}

\newcommand{\EE}{\mathcal{E}}

\newtheorem*{ackno}{Acknowledgements}

\numberwithin{equation}{section}
\numberwithin{theorem}{section}

\newcommand{\PP}{\mathbb{P}}

\newcommand{\vp}{\varphi}

\begin{document}
\baselineskip = 14pt

\title[Almost sure global for BBM]
{Almost sure global well-posedness for the Benjamin-Bona-Mahony equation}

\author[J.~Forlano]
{Justin Forlano}

\address{Justin Forlano,
School of Mathematics, Monash University, VIC 3800, Australia}

\email{justin.forlano@monash.edu}

\subjclass[2010]{35Q53, 76B15}

\keywords{BBM equation; almost sure local well-posedness; almost sure global well-posedness}

\begin{abstract}
We prove that the Benjamin-Bona-Mahony equation is almost surely globally well-posed with respect to random Gaussian initial data of negative Sobolev regularity in $H^{s}(\T)$ for any $s>-\frac 14$.  This result is sharp in view of the mild probabilistic ill-posedness due to Oh-Tzvetkov (2026).
This also improves on a previous result of the author which established this only in a logarithmically negative regularity. To break through this logarithmic regularity barrier, we combine the $I$-method with the low-high argument from Bona-Tzvetkov (2007), which motivates a refined first-order expansion for solutions.
\end{abstract}

\maketitle
%

\section{Introduction}

We consider 
the Benjamin-Bona-Mahony equation (BBM): 
\begin{align}
\begin{cases}
\partial_{t}u-\partial_{xxt}u+\partial_{x}u+\frac{1}{2}\partial_{x}(u^{2})  = 0\\
u|_{t = 0} = u_0, 
\end{cases}
\ (x, t) \in \T \times \R_{+},
\label{BBM0}
\end{align}

\noi
where $u:\T \times \R_{+} \mapsto \R$ and $\T:=\R / (2\pi \Z)$ is the circle\footnote{We could also consider the BBM equation \eqref{BBM0} on $\T\times \R$ because of the time-reversal symmetry $u(x,t)\mapsto u(-x,-t)$ (viewing $\T$ as $[-\pi,\pi)$). However, for simplicity, we consider only positive times in the following.}.
The BBM equation is a model for the propagation of long wavelength, short amplitude water waves~\cite{Peregrine, BBM}.
 In particular, in \cite{BBM}, it was proposed as an alternative to the Korteweg-de Vries (KdV) equation.
 This is in part due to the boundedness of the dispersion relation for BBM while the dispersion relation for KdV is unbounded. For further discussion on the physical validity of the BBM model, see for example \cite{Model1, Model2, Model3}. See also \cite{HJY} which considered a rigorous limit problem from BBM to KdV on $\R$.

Our goal in this paper is to study the global well-posedness of BBM~\eqref{BBM0} in the low regularity setting. 
To this end, we rewrite \eqref{BBM0} in a form amenable for our analysis. We first remove the linear term $\dx u$ by translating vertically:
\begin{align}
 u(t,x) \longmapsto \wt{u}(t,x) =  u(t,x) + 1 \label{translate}
\end{align}
so that $\wt{u}$ now solves 
\begin{align}
\begin{cases}
\partial_{t}\wt{u}-\partial_{xxt}\wt{u}+\frac{1}{2}\partial_{x}(\wt{u}^{2})  = 0\\
\wt{u}|_{t = 0} = u_0+1.
\end{cases}
\label{BBM1}
\end{align}
As we work on the bounded set $\T$, the map \eqref{translate} is a bijection from $H^s(\T)$ to $H^{s}(\T)$ for any $s\in \R$. From now on, we focus entirely on \eqref{BBM1} rather than \eqref{BBM0}, and thus relabel $\wt{u}$ as $u$. 

The reason for the change of unknown \eqref{translate} is that it simplifies the dependence on time for solutions to \eqref{BBM1}. In particular, by factoring $\dt u - \dx^2 \dt u= (1-\dx^2)\dt u$, we may rewrite \eqref{BBM1} as 
\begin{align*}
\dt u = -\tfrac{1}{2} \vp(\dx) ( u^2)
\end{align*}
which becomes the integral equation
\begin{align}
u(t)  =    u_0 -\frac12  \int_{0}^{t} \vp(\dx) ( u(t')^2 )dt'  . \label{BBM}
\end{align}
where $\vp(\dx):= (1-\dx^2)^{-1}\dx$.  This formulation removes the need for a linear propagator $e^{t \vp(\dx)}$ appearing in \eqref{BBM} and this observation will simplify our analysis later on.

Regarding well-posedness, \eqref{BBM} is globally well-posed in $L^2(\T)$ and in $L^2(\R)$, see \cite{BCS1, BCS2, BonaTzvet, Roumegoux}. 
This result is sharp as~\eqref{BBM0} is ill-posed in negative Sobolev spaces in the sense that the solution map fails to be continuous in $H^s(\T)$ for any $s<0$ \cite{Panthee, BonaNormInf, FBBM}, through the phenomenon of infinite loss of regularity at general initial data \cite{BH}.

As the failure of continuity may be due to only certain exceptional bad  approximations, it does not rule out the existence of some unique solutions in negative regularity obeying some approximation property. 
In this vein, motivated by \cite{BTlocal, BTglobal, CollOh}, the author \cite{FBBM} previously studied the almost sure well-posedness problem for \eqref{BBM0} with random initial data below $L^2(\T)$. Namely,  random initial data of the form\footnote{We drop the factor of $2\pi$ as it plays no role in our analysis.} 
\begin{equation}
u_{0}^{\omega}(x)=\sum_{n\in \Z}\frac{g_{n}(\omega)}{\jb{n}^{\alpha}}e^{inx}, \label{initialdata}
\end{equation}
where $\al\in \R$, $\jb{\, \cdot\,}:=\sqrt{1+|\cdot|^{2}}$ and  $\{g_{n}\}_{n\in \Z}$ is a sequence of independent standard complex-valued Gaussian random variables on a probability space $(\Omega, \mathcal{F}, \prob)$ satisfying  $g_{n}=\overline{g_{-n}}$ and $g_{0}$ is real.
This initial data has the regularity property:\footnote{Here, we use the notation $a-$ (respectively, $a+$) to denote $a-\eps$ (respectively, $a+\eps$), where $0<\eps \ll1$ is extremely small.}
\begin{align}
u_{0}^{\o}\in H^{\alpha-\frac{1}{2}-}(\T)\setminus H^{\alpha-\frac{1}{2}}(\T) \label{regu0}
\end{align}
 almost surely. In view of the deterministic global well-posedness, we are interested in the regime $\al \leq \tfrac 12$, which corresponds to initial data \textit{outside} of $L^2(\T)$.
 In \cite{FBBM}, the author showed that BBM~\eqref{BBM0} is almost surely locally well-posed when $\al>\frac 14$, and almost surely globally well-posed in the special case $\al=\frac 12$. This latter restriction arose from the transitional behaviour of growth rates of the variance of \eqref{initialdata}: namely, if $\P_{\leq N}$, $N\in \N$, is the sharp Fourier projection to frequencies $\{|n|\leq N\}$, then
 \begin{align}
\g_{N}(\al)= \E[ |\P_{\leq N}u_0(x)|^2 ] = \sum_{|n|\leq N} \frac{1}{\jb{n}^{2\al}}  \sim 
\begin{cases}
\log N \quad &\text{if} \quad \al=\frac 12, \\
N^{1-2\al} \quad &\text{if} \quad \al<\frac 12,
\end{cases}
\label{variance}
\end{align}
 for any $x\in \T$. We see that the variance diverges polynomially if $\al<\frac 12$, but only logarithmically when $\al=\frac 12$. As we discuss further below, the globalisation argument in \cite{FBBM} could only handle the logarithmic blow-up situation $\al=\frac 12$.
 
In this paper, we overcome this logarithmic threshold and extend the almost sure global well-posedness to any $\al>\frac 14$.  We state our main result for \eqref{BBM}. See Remark~\ref{RMK:BBM} below for a similar result for \eqref{BBM0}.

 \begin{theorem}\label{THM:ASGWP} 
Let $\al>\frac 14$ and  $0<\e \ll 1$. Then, the BBM equation \eqref{BBM} is almost surely globally well posed in $H^{\al-\frac 12-\eps}(\T)$, with respect to random initial data of the form 
\begin{align}
u_{0}^{\o}=\sum_{n\in \Z}\frac{g_{n}(\o)}{\jb{n}^{\al}}e^{inx}. \label{globaldata}
\end{align}
 More precisely, for almost every $\omega \in \Omega$, there exists a unique solution $u$ of \eqref{BBM} in \begin{align*}
u_{0}+C(\R; H^{\min(3\al-\frac 12 ,2\al)-\eps}(\T))\subset C(\R; H^{\al-\frac 12-\eps}(\T))
\end{align*}
with initial condition $u_{0}^{\omega}$ of the form \eqref{globaldata} and which satisfies 
\begin{align}
\| u(t) \|_{H^{\al-\frac 12-\eps}} \leq C(\o) (1+|t|)^{a} \label{polygrowth}
\end{align}
for some $a=a(\al)>0$ and $C(\o)>0$.
\end{theorem}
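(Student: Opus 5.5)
We write $u = u_0 + v$, so that the remainder $v$ solves
\begin{align*}
v(t) = -\tfrac12\int_0^t \vp(\dx)\big( u_0^2 + 2u_0 v + v^2\big)\,dt'.
\end{align*}
Local theory for $v$ in $H^{\min(3\al-\frac12,2\al)-\eps}$ follows the almost sure local well-posedness of \cite{FBBM}. The forcing $\vp(\dx)(u_0^2)$ is time-independent, and after removing the mean (which $\vp$ kills) it lies in $H^{2\al-}$, since $\vp$ gains one derivative and the Wick-type product $u_0^2$ lies in $H^{2\al-1-}$. The bilinear term $\vp(\dx)(u_0 v)$ is handled by a product estimate for $v$ smooth. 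The difficulty is globalisation. There is no useful conservation law at the level of $v$; the conserved quantity $\int u^2+(\dx u)^2$ is infinite for $u_0$. The growth \eqref{variance} of $\g_N(\al)\sim N^{1-2\al}$ prevents any naive truncation argument, and this is why \cite{FBBM} was restricted to $\al=\frac12$.

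The plan is to combine a low-high decomposition in the spirit of Bona-Tzvetkov with the $I$-method. Fix a large parameter $N$ (depending on the target time $T$) and split $u_0=\P_{\le N}u_0+\P_{>N}u_0$. The low part has $H^1$ norm of size $\sim N^{\frac32-\al}$ with high probability. Let $w$ be the global $H^1$ solution of BBM from $\P_{\le N}u_0$; its $H^1$ norm is conserved by the energy $E(w)=\int w^2+(\dx w)^2$. Write $u=w+\P_{>N}u_0+z$. The expected refined first-order expansion is $z = z_1 + \wt z$, where
\begin{align*}
z_1(t) = -\int_0^t \vp(\dx)\big(w\,\P_{>N}u_0\big)\,dt' - \tfrac t2\,\vp(\dx)\big(\P_{>N}u_0\big)^2.
\end{align*}
Here $z_1$ is treated probabilistically: conditionally on the low modes, the high modes are independent of $w$. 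The remainder $\wt z$ is estimated in $H^1$ or in $H^{\min(3\al-\frac12,2\al)-}$ by a Gronwall-type energy argument. Alternatively, we apply the $I$-method directly to $v$. In that version we define $Iu$ with a multiplier equal to $1$ on $|n|\le N$ and $\sim (N/|n|)^{1-s}$ above, and compute $\frac{d}{dt}E(Iu)$, which is a trilinear commutator. Its expectation and large deviations are controlled by Wiener chaos estimates, while $\vp$ supplies the smoothing.

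The key steps, in order, are as follows.
\begin{enumerate}
\item Establish probabilistic bounds, holding outside sets of measure $e^{-N^{c}}$, on $\|\P_{>N}u_0\|_{H^{\al-\frac12-}}$, on the renormalised products $\vp(\dx)(\P_{>N}u_0)^2$, and on $\vp(\dx)(f\,\P_{>N}u_0)$ for deterministic $f\in H^1$. The last bound must be stated with a constant uniform in $f$, via a conditional Gaussian argument.
\item Prove local existence for $\wt z$ on intervals of length $\delta\sim N^{-\theta}$.
\item Prove an almost-conservation estimate: the modified energy $E(w+z)$, or $E(Iu)$, increments by at most $N^{-\beta}$ times a power of the energy per local step.
\item Iterate about $T N^{\theta}$ steps, choosing $N=N(T)$ polynomial in $T$ so that the accumulated increment stays at most the initial energy size $N^{3-2\al}$. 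Run a Borel-Cantelli argument over dyadic $T$, which yields the polynomial bound \eqref{polygrowth}.
\end{enumerate}
Uniqueness in the stated class follows from the local theory.

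The main obstacle is step 3, the almost-conservation estimate, specifically the interaction of the rough high-frequency data with the low-frequency energy. The term $\int \dx w\cdot \vp(\dx)(w\,\P_{>N}u_0)$-type contributions only gain if the first-order correction $z_1$ is subtracted, so that the worst high-low interactions cancel. I would first try to show that the energy increment of $w+z$ with $z_1$ removed is a sum of terms each containing at least two high-frequency factors or an explicit $\vp$-smoothed factor, and estimate these in $L^2_\o$ by counting resonant frequencies. The threshold $\al>\frac14$ should appear exactly as the condition $3\al-\frac12>\al-\frac12+\ldots$ making the exponent $\beta$ positive relative to $\theta$.
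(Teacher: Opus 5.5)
Your proposal leaves the decisive estimate (step 3) open, and the mechanisms you suggest for it fail for concrete reasons.

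\textbf{The Bourgain-type route.} Here $w$ is the exact $H^1$ flow from $\P_{\le N}u_0$ and $z$ is to be fed back into the low part. The remainder can never be placed in $H^1$. For any $f$ with nonzero low modes (e.g.\ the conserved mean), $\vp(\dx)(f\,\P_{>N}u_0)$ has Fourier coefficients of size $|n|^{-1-\al}$, so it lies in $H^{\al+\frac12-}$ but not in $H^{\al+\frac12}$. Likewise $\vp(\dx)(\P_{>N}u_0)^2$ lies only in $H^{2\al-}$. Subtracting $z_1$ just moves the obstruction to the term $\vp(\dx)(\P_{>N}u_0\, z)$ in the equation for $\wt z$: multiplication by $\P_{>N}u_0$ followed by $\vp(\dx)$ caps the regularity at $\al+\frac12<1$. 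Hence $E(w+z)$ is infinite, $\wt z\notin H^1$, and the high-low iteration cannot be restarted. A Gronwall bound for $\wt z$ is useless anyway, because of the linear term $\vp(\dx)(w\,\wt z)$, whose coefficient grows like a power of $N$ ($\|w\|_{H^1}\sim N^{\frac32-\al}$).

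\textbf{The $I$-method route.} This forces the $I$-method, but your version of it, with the data split and the $I$-cutoff at the same scale $N$, is exactly the argument of \cite{FBBM}. For $W=u-\P_{>N}u_0$ (or $v=u-u_0$), $\frac{d}{dt}E(IW)$ is not a pure trilinear commutator with an $N^{-\be}$ gain. It also contains $\int\dx IW\cdot I(\P_{>N}u_0\,W)\,dx$, whose coefficient $\|I\P_{>N}u_0\|_{L^2}\sim N^{\frac12-\al}$ does not decay. Summed over your $TN^{\theta}$ steps this gives growth $e^{CN^{\frac12-\al}T}$, i.e.\ the restriction \eqref{blowupT}, which no choice $N=N(T)$ overcomes for $\al<\frac12$. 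Wiener chaos bounds on the commutator do not touch this term. If you meant $E(Iu)$ for the full solution, then $Iu\in H^1$ forces a negative $I$-exponent, and Lemma~\ref{LEM:comm} (which needs $s\ge 0$) is unavailable. Moreover, a bound on $\|Iu\|_{H^1}$ gives no control of $v$ in the positive-regularity space where its blow-up criterion lives.

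\textbf{The missing idea.} You need to separate the two scales and reverse the roles of the high and low parts.
\begin{enumerate}
\item Split the data at $M\gg N$ (the paper takes $M=N^k$, $k>1$) and evolve the high part nonlinearly first. Define $v_M$ so that $\P_{>M}u_0+v_M$ solves BBM from $\P_{>M}u_0$. It exists on all of $[0,2T]$ by a single contraction and satisfies $\|v_M\|_{C([0,2T];H^{\s})}\le M^{-2\al+\s+}$, because $\P_{>M}u_0$ and $\NN(\P_{>M}u_0)$ decay in $M$ in the relevant norms (Proposition~\ref{PROP:randomHIGH}).
\item The large remainder $w_M$, with data $\P_{\le M}u_0$, then interacts only linearly with small objects. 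The $I$-method at scale $N$ gives $\frac{d}{dt}\EE\les N^{-\frac32}\EE^{\frac32}+B\,\EE$ with $B=M^{-2\al+\s+}+(N^{1-s}M^{-\frac12-\al+s+}+N^{-\al-})L$ small, by Lemmas~\ref{LEM:comm}, \ref{LEM:comm2} and \eqref{Ihigh}.
\item A single bootstrap on $[0,T]$, with no short-time iteration, closes if $\|I\P_{\le M}u_0\|_{H^1}\le BN^{\frac32}$ and $BT\les 1$. With $M=N^k$ this requires $\frac{1-s}{\frac12+\al-s}<k<\frac32$, which is possible iff $s<3\al-\frac12$.
\end{enumerate}
Compatibility with $s>\frac12-\al$ (needed for the rough products and for the local theory of $w_M$) is exactly $\al>\frac14$, not the condition you guessed. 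The sharp commutator exponent $\frac32$ is used quantitatively here, to pay for the large initial modified energy; in your same-scale set-up its value would be irrelevant.
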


 As discussed below, the result of Theorem~\ref{THM:ASGWP} constitutes crossing the logarithmic regularity barrier in the $I-$method argument used in \cite{FBBM} (for random initial data), which was based of the method in \cite{Leonardo, GKOT} (for stochastic dispersive PDE).
The key to this improvement is a \textit{refined} first-order expansion: for every $T>0$, there is $M=M(T)>0$ satisfying $M\to \infty$ as $T\to \infty$ for which
 \begin{align}
u(t)  = \P_{>M}u_0  + w_{M}(t)+ v_{M}(t) \label{expansion}
\end{align}
 for each $t\in [0,T]$, where $\P_{>M}:= \Id - \P_{\leq M}$ and where, when $\al\leq \frac 12$,

\smallskip
\noi
(i) $\P_{>M}u_0$ is part of the rough random initial data which gets \textit{smaller} as $T \to \infty$ in $H^{\al-\frac 12-}(\T)$ (replace by $\P_{>M}S(t)u_0$ if the equation has a non-trivial linear group)

\smallskip
\noi
(ii) $w_{M}$ is a \textit{large} mesoscopic term with regularity $H^{3\al -\frac 12-}(\T)$

\smallskip
\noi
 (iii) $v_{M}$ is a \textit{small} macroscopic term in $H^{2\al-}(\T)$ and where $u_{M} : = \P_{>M}u_0 +v_M$ solves \eqref{BBM0} with data $\P_{>M}u_0$ 
 
 \smallskip
 \noi
 This refined expansion is more suitable for establishing global-in-time dynamics, as evidenced by Theorem~\ref{THM:ASGWP}, as the standard first-order expansion \cite{Bourgain1, mckean, dpd} $u(t) = u_0+y(t)$ is insufficient when $\al<\frac 12$; see the discussion below. 
 
 We also point out that the argument in \cite{FBBM} yielded a super-exponential in time growth bound on solutions ($\sim e^{t^{b}}$ for some $b\in \R$) when $\al=\frac 12$. Our argument here improves this to a polynomial growth bound \eqref{polygrowth}.
 
 Moreover, our result is sharp as it covers the full range for which a satisfactory probabilistic well-posedness theory holds. 
 Indeed, when $\al \leq \frac 14$, the variance blow-up phenomenon occurs due to the inability to construct the quadratic random object $\vp(\dx)( u_0^{2})$ as a distribution. See \cite[Section 3.2]{FBBM}. Recently, Li-Li-Oh-Tzvetkov \cite{LLOT} investigated the probabilistic well-posedness issue for \eqref{BBM0} beyond the variance blow-up corresponding to \textit{any} $\al\leq \frac 14$. They showed that by rescaling the initial data by replacing it by $\g_{N}^{-1/4}\, \P_{\leq N}u_0^{\o}$, then the corresponding sequence of solutions $\{u_{N}(t)\}$  converges in law to the (global) solution of the stochastic BBM forced by a derivative of the spatial white noise $\xi$:
 \begin{align}
 \dt u - \dx^{2} \dt u + \dx u +\dx(u^2) = \dx \xi, \quad u\vert_{t=0}=0. \label{BBMxi}
\end{align}
In this renormalised sense, the almost sure global well-posedness for \eqref{BBM0} with random initial data of the form \eqref{initialdata} can be extended beyond the variance blow-up. However, without weakening the initial data, Theorem~\ref{THM:ASGWP} is sharp. Recently, this variance blow-up phenomenon has been reinterpreted as a form of mild ill-posedness for probabilistic Cauchy theory \cite{OT}, and thus Theorem~\ref{THM:ASGWP} is sharp from this point of view.

 Initial data of the form \eqref{initialdata} is also natural as it corresponds to typical elements belonging to the support of the infinite-dimensional Gaussian measure $\mu_{\al}$ formally given by
\begin{align}
d\mu_{\al}=Z_{\al}^{-1}e^{ -\frac{1}{2}\|u\|_{H^{\al}(\T)}^{2}}du. \label{mual}
\end{align}
Using this perspective, we may rephrase Theorem~\ref{THM:ASGWP} as almost sure global well-posedness of BBM~\eqref{BBM0} with respect to the Gaussian measure $\mu_{\al}$ supported on $H^{\al-\frac 12-}(\T)$ for any $\al>\frac{1}{4}$. 

For the context of BBM~\eqref{BBM0}, the transport properties of Gaussian measures under the nonlinear flow of \eqref{BBM0} have been extensively studied~\cite{desuzzoni1, desuzzoni2, desuzzoni3, Tzv3}. As BBM is a Hamiltonian PDE with Hamiltonian given by the energy \eqref{energycons}, 
$\mu_{1}$ is an invariant measure and this was shown in~\cite{desuzzoni2}. 
For the Gaussian measures $\mu_{\al}$ with $\al\neq 1$, we no longer expect invariance. In this setting, it has been shown \cite{Tzv3, GTV} that, when $\al>2$, the push-forward of $\mu_{\al}$ under the solution map of \eqref{BBM0} is quasi-invariant (i.e.~mutually absolutely continuous) with respect to $\mu_{\al}$. In view of Theorem~\ref{THM:ASGWP}, it is natural to ask if this quasi-invariance property continues to hold for all $\al>\frac 14$. 
We also note that construction of solutions to nonlinear dispersive PDEs with Gaussian initial data of the form \eqref{initialdata} (and in higher dimensions) is heavily motivated by the study of invariant measures. 
We mention \cite{Bourgain1, CollOh, DNY2, DNY3} for a very small selection of related breakthroughs; see also the references therein and the surveys \cite{BOP4, DNYsurvey}.

 We now move to discussing the proof of Theorem~\ref{THM:ASGWP}.
 The main idea is to combine the $I$-method with the low-high method used by Bona-Tzvetkov~\cite{BonaTzvet} for the global well-posedness of BBM in $L^2$.  
 To explain how this combination works, let us first recall the arguments in \cite{FBBM} whose global theory was adapted from \cite{Leonardo, GKOT}, and specialised to the simpler model \eqref{BBM}.
 For fixed $\al\in (\tfrac 14, \tfrac 12]$, we consider the first order expansion:
 \begin{align*}
u(t) = u_0+v(t)
\end{align*}
 with $v$ satisfying 
 \begin{align}
\dt v(t) = -\tfrac{1}{2} \vp(\dx)( (u_0+v)^2) =   -\tfrac{1}{2} \vp(\dx)( v^2 +2u_0 v) -\tfrac{1}{2} \mathcal{N}(u_0), \label{veq}
\end{align}
with $v(0)=0$ and
where the quadratic object
\begin{align}
\mathcal{N}(u_0) : = \vp(\dx)(u_0^2)  \label{quadraticobject}
\end{align}
is constructed via probabilistic means and has the regularity $2\al-$. 
The construction of $\mathcal{N}(u_0)$ works without renormalisation as \eqref{BBM} is self-renormalising because the operator $\vp(\dx)$ vanishes on constants. 
Then, a contraction mapping argument can be closed to construct $v\in C([0,T_{\max});H^{s}(\T))$ for any $\frac 12 <s<2\al$ and for some (random) maximal existence time $T_{\max}=T_{\max}( u_0)>0$, depending only on norms of $u_0$ and $\mathcal{N}(u_0)$. Moreover, there is the blow-up criterion that $T_{\max}=+\infty$ unless
\begin{align}
\lim_{T\to T_{\max}} \|v(t)\|_{H^{s}(\T)}=\infty. \label{vhsbnd}
\end{align}
For almost sure global well-posedness, the goal in \cite{FBBM} was to refute \eqref{vhsbnd} when $\al=\frac 12$ by deriving an a priori bound on the remainder $v$.  Typically such bounds are based off  coercive conservation laws for which \eqref{BBM0} has only the energy
\begin{align}
E(u)&:=\tfrac 12 \|u\|_{H^{1}(\T)}^{2}. \label{energycons} 
\end{align}
There are then two difficulties:
(i) as soon as $\al\leq \frac 12$, $v$ does not have sufficient regularity for $E(v)$ to be finite, and (ii) $v$ solves the perturbed BBM equation  \eqref{veq} and so $E(v)$ would not be conserved even if $v$ were somehow smoother.

The first issue (i) motivated applying the $I$-method of Colliander-Keel-Staffilani-Takaoka-Tao~\cite{Iteam1, Iteam2} in this probabilistic context. Namely,
we smooth the initial data by applying the Fourier multiplier operator $I_{N}$ given by 
$\widehat{I_{N}f}(n)=m_{N}(n)\widehat{f}(n)$, $n\in\Z$, where $\,\widehat{\cdot}\,$ denotes the Fourier transform, and  $m_{N}(n)$ is the restriction to the integers of the smooth function $m:\R \to \R$ defined by 
\begin{align}
m_{N}(\xi):= m\bigg(\frac{\xi}{N}  \bigg) =
\begin{cases}
1    \,\,\, & \text{if}\,\,\, |\xi|\leq N,  \\
\Big( \frac{N}{|\xi|} \Big)^{1-s} \,\,\,&\text{if}\,\,\, |\xi|> N.
\end{cases} \label{Imult}
\end{align}
The operator $I_{N}$ is the identity on low frequencies and a fractional integral operator on high frequencies, hence the name $I$-method.
For simplicity of presentation, we will now drop the subscript $N$. 
It is easy to see that $Iv(t)\in H^{1}(\T)$ almost surely and satisfies 
\begin{align}
\begin{cases}
\partial_{t}Iv =  -\tfrac{1}{2}\vp(\dx) (  I(v^2)  +2 I(u_0 v)) -\tfrac12 I\mathcal{N}(u_0)   , \\
Iv|_{t = 0} = 0. 
\end{cases} 
\label{IBBM}
\end{align}
By defining the `modified energy' 
\begin{align}
\mathcal{E}(t) := E(Iv)(t)=\tfrac{1}{2}\| Iv(t)\|_{H^{1}(\T)}^{2}, \label{modenergy}
\end{align}
and observing $\|v(t)\|^2_{H^{s}(\T)} \les \EE(t)$
we reduce refuting \eqref{vhsbnd} to showing
\begin{align}
 \sup_{t\in [0,T]}\EE(t) \leq C(T)<\infty \label{modvbnd}
\end{align}
for any fixed $T>0$.
 Taking a time derivative of $\EE$ and inserting \eqref{IBBM}, we schematically arrive at 
\begin{align}
\frac{d}{dt}\EE(t) \approx  \int_{\T} (\dx Iv) I(v^2)dx+ \int_{\T}Iu_0 \, Iv \,\dx Iv \, dx + \text{lower order terms}. \label{schematicODE}
\end{align}
If the $I$ operator did commute with the product $v^2$, the first term on the right-hand side of \eqref{schematicODE}  would vanish due to the energy cancellation
\begin{align*}
\int_{\T} \dx Iv \cdot (Iv)^2 dx = 0
\end{align*}
 and we could close \eqref{schematicODE} with a simple bound 
\begin{align}
\frac{d}{dt}\EE(t)  \les \|I u_0\|_{L^2} \EE(t). 
\label{gronwall1}
\end{align}
As $u_0\not\in L^2$ a.s., we typically have
\begin{align}
\| I u_0 \|_{L^2} \sim \g_{N}(\al)^{\frac 12},
\end{align}
 where $\g_N(\al)$ was defined in \eqref{variance}.
Despite the growth in \eqref{variance}, \eqref{gronwall1} would be closable by a Gronwall argument. 
Of course, $I$ does not commute with products and this leads to a commutator which satisfies the bound
\begin{align}
\bigg|\int_{\T} (\dx Iv)[I(v^2)-(Iv)^{2}]\, dx \bigg| \les N^{-\be}\EE(t)^{\frac 32}, \label{commest}
\end{align}
for some $\be>0$ (in fact, $\be=\frac 32$).
In particular, we essentially end up with the Riccati-type ODE: 
\begin{align}
\frac{d}{dt}\EE(t) \les N^{-\be} \EE(t)^{\frac 32} +  \g_{N}(\al)^{\frac 12}\EE(t). \label{riccati}
\end{align}
As $\EE(0)=0$, this yields a bound on $\EE(t)$ for times $0\leq t \leq T_{N}$, as long as $T_{N}$ satisfies 
\begin{align}
T_{N}   \ll \frac{\be \log N}{\g_{N}(\al)^{\frac 12}}.  \label{blowupT}
\end{align}
In order to allow $T_{N}\to \infty$ as $N\to \infty$, \eqref{variance} forces us to take precisely $\al=\frac 12$, recovering the result in \cite{FBBM}. Two items become clear here: (i) it is impossible in this argument to take any $\al<\frac 12$, and, more subtly, (ii) there is \textit{no} benefit in optimising the value of $\be$ in \eqref{commest}; one only needs \eqref{commest} for \textit{some} $\be>0$. The last point (ii) is unlike the situation in the usual deterministic $I$-method, where optimising the values of $\be$ in the commutator estimates leads to improved regularity restrictions.

Our route to proving Theorem~\ref{THM:ASGWP} is to introduce a new modified argument that resolves both of the previous points: it goes beyond the logarithmic correlation barrier in \eqref{blowupT} and makes use of the value of $\be$ in \eqref{commest}. To this end, we combine the $I$-method argument above with the low-high method due to Bona-Tzvetkov~\cite{BonaTzvet}. 

 Let $T>0$ be the target time to construct a solution to \eqref{BBM1}. 
We then introduce a new parameter $M\in 2^{\N}$ and split the initial data at this frequency:
\begin{align*}
u_0 = \P_{\leq M} u_0 + \P_{>M}u_0.
\end{align*}
We let $v_{M}$ denote the solution to 
 \begin{align}
\dt v_{M} = -\tfrac{1}{2} \vp(\dx)( (\P_{>M}u_0+v_M)^2) =   -\tfrac{1}{2} \vp(\dx)( v_{M}^2 +2(\P_{>M}u_0) v_M) -\tfrac{1}{2} \mathcal{N}(\P_{>M}u_0), \label{veqM}
\end{align}
with $v_{M}(0)=0$. Given $\frac 12 -\al<\s<2\al$, we have 
\begin{align*}
\|  \P_{>M}u_0\|_{W^{-1+\s, \infty}} \les_{\o} M^{-\frac 12-\al+\s+} \|u_0\|_{W^{\al-\frac 12-,\infty}} \quad \text{and} \quad \| \mathcal{N}(\P_{>M}u_0)\|_{H^{\s}} \les_{\o} M^{\s-2\al+}
\end{align*}
almost surely, which are \textit{decaying} factors in $M$. Thus, by a probabilistic local-in-time theory,
we may choose $M=M(\o,T)$ sufficiently large, so that $v_{M}\in C([0,2T];H^{\s}(\T))$ solving \eqref{veqM} exists on the entire interval $[0,T]$ and moreover, satisfies 
\begin{align}
\| v_{M}\|_{C([0,2T]; H^{\s})} \les M^{\s-2\al+} \to 0 \label{vMlimit}
\end{align}
as $M\to \infty$.
Next, we consider $w_{M}$ which is the evolution from the low-frequency contribution of the initial data:
\begin{align}
\begin{cases}
\partial_{t}w_M =  -\tfrac{1}{2}\vp(\dx) (  w_{M}^2 +2(\P_{>M}u_0+v_M)w_M   )   , \\
w_M\vert_{t = 0} = \P_{\leq M}u_0. 
\end{cases} 
\label{BBMwM}
\end{align}
Then, $u= \P_{>M}u_0 + v_M +w_M$ solves the original BBM equation \eqref{BBM1} with data $u_0$. Even though the initial data for $w_M$ is smooth, the rough forcing term $\P_{>M}u_0$ limits the regularity of $w_M$ so that, for short times, we may only construct $w_M(t) \in H^{s}(\T)$ for $\frac 12 -\al<s<\al+\frac 12$. In the following, we will use the flexibility afforded by measuring $v_M$ and $w_M$ with the different regularities $\s$ and $s$, respectively (which gives rise to the minimum regularity in Theorem~\ref{THM:ASGWP}).

As $s<1$, to globalise $w_M$, and hence establish the almost sure global well-posedness for \eqref{BBM1}, we want to prove an a priori bound and for this we use the $I$-method approach detailed before. Using the same modified energy $\EE$ for $w_M$ as in \eqref{modenergy}, we essentially have
\begin{align}
\bigg\vert \frac{d}{dt}\EE(t)  \bigg\vert \les   N^{-\be} \EE(t)^{\frac 32} + \big(\| I \P_{>M}u_0\|_{L^2}+\|v_M(t)\|_{L^2})   \EE(t) . \label{schematicODE2}
\end{align}
The key difference compared to \eqref{schematicODE} is that the factors on the linear term $\EE(t)$ can be made \textit{small} in view of \eqref{vMlimit} and the estimate
\begin{align}
\| I \P_{>M}u_0\|_{L^2} \les_{\o} N^{1-s}M^{-\frac 12-\al+s}\|u_0\|_{H^{\al-\frac 12-}} \label{Ihigh}
\end{align}
which follows from enforcing $M>2N$.  
We then arrive at 
\begin{align}
\bigg\vert \frac{d}{dt}\EE(t)  \bigg\vert \les   N^{-\be} \EE(t)^{\frac 32} + [ N^{1-s}M^{-\frac 12-\al+s} +M^{-2\al+\s+}]  \EE(t),    \label{schematicODE3}
\end{align}
This significantly weakens \eqref{blowupT} at the expense of introducing the large initial data $\EE(0) = \| I \P_{\leq M}u_0\|_{H^{1}}^2$.
Nonetheless, by taking $M\sim N^{k}$ for some $k>1$, a Gronwall inequality gives a bound on $\EE(t)$ for all $t<2T$ as long as $M,N$ satisfy both of
\begin{align*}
2T [ N^{1-s}M^{-\frac 12-\al+s} +M^{-2\al+\s+}] \ll 1 \,\,\, \text{and} \,\,\, \EE(0)^{\frac 12} \ll N^{\be} [ N^{1-s}M^{-\frac 12-\al+s} +M^{-2\al+\s+}].
\end{align*}
See the proof of Proposition~\ref{prop:bdk} for details leading to these conditions.
The better decay from \eqref{Ihigh} and the value of $\be$ in \eqref{commest} conspire enough so that we can meet these conditions by choosing $M\sim N^{k(\al,s)}$ for some $k>1$ not too large, and for $s<3\al -\frac 12$ (and any $\s<2\al$). The restriction on $s$ is then consistent with the lower bound $s>\frac 12 -\al$ when $\al>\frac 14$, which completes the argument.

We point out that this argument is different to that of Colliander-Oh \cite{CollOh} for the one-dimensional cubic NLS with Gaussian random initial data as their argument is based on adapting, to the probabilistic setting, Bourgain's high-low argument \cite{Bourgain3}. Briefly, the initial data is also split into high and low parts. However, the evolution of the low part is solved exactly, while that for the high part is solved via the difference equation. The argument iterates based on a higher conservation for the low part (say, $L^2$) and nonlinear smoothing for the high part.

The method developed in this paper may be applicable to similar globalisation problems and the author intends to pursue these directions in future work.

  \begin{remark}\rm \label{RMK:BBM}
Using the translation \eqref{translate}, Theorem~\ref{THM:ASGWP} implies almost sure global well-posedness for \eqref{BBM0} with initial data of the form $u_{0}^{\o}-1$. Then, we obtain almost sure global well-posedness for \eqref{BBM0} with initial data exactly of the form $u_0^{\o}$ as a consequence of the Cameron-Martin Theorem; see \cite{OQ} for similar details in the context of the cubic nonlinear Schr\"{o}dinger equation.
 \end{remark}

 \begin{remark}\rm \label{remark: gaussianity}
 Theorem~\ref{THM:ASGWP} also extends to random initial data \eqref{initialdata} where the random variables $\{g_{n}\}_{n\in \Z}$ are not necessarily Gaussian. For precise assumptions, see those in \cite[Appendix A]{FBBM}. 
In fact, as the random object $\NN(u_0)$ is no longer time dependent, we only need second-moment estimates on $\|\NN(\P_{>M}u_0)\|_{H^{\s}}$ which is technically simpler than those in \cite[Appendix A]{FBBM}.
\end{remark}

\section{Deterministic and probabilistic tools}\label{sect:lemmas}

\subsection{Deterministic tools}

First, we recall the following key bilinear estimate, due to Bona-Tzvetkov \cite{BonaTzvet} (see also Roum\'egoux~\cite{Roumegoux}).

\begin{lemma}[\cite{BonaTzvet, Roumegoux}]\label{LEM:BT}
For any $s\geq 0$ and any $f,g\in H^{s}(\T)$, we have \begin{equation}
\| \varphi(  \dx )(fg)\|_{H^{s}(\T)} \lesssim \|f\|_{H^{s}(\T)}\|g\|_{H^{s}(\T)}. \label{productestimateBT}
\end{equation}
\end{lemma}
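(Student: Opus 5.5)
To prove \eqref{productestimateBT} we work on the Fourier side. Since $\varphi(\dx)$ has symbol $\frac{in}{1+n^2}$, for $n\in\Z$ we have
\begin{align*}
\big|\widehat{\varphi(\dx)(fg)}(n)\big| \le \frac{|n|}{\jb{n}^2}\sum_{n_1+n_2=n}|\widehat f(n_1)||\widehat g(n_2)|.
\end{align*}
Because $\jb{n}^{s}|n|\jb{n}^{-2}\le \jb{n}^{s-1}$, it suffices to bound the $\ell^2_n$ norm of
\begin{align*}
\jb{n}^{s-1}\sum_{n_1+n_2=n}|\widehat f(n_1)||\widehat g(n_2)|.
\end{align*}
We write $a(n_1)=\jb{n_1}^s|\widehat f(n_1)|$ and $b(n_2)=\jb{n_2}^s|\widehat g(n_2)|$, both in $\ell^2$ with norms $\|f\|_{H^s}$ and $\|g\|_{H^s}$. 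By symmetry we may restrict the sum to the region $|n_1|\ge|n_2|$, so $|n_1|\ge |n|/2$. There $\jb{n}^{s}\les \jb{n_1}^{s}$ because $s\ge 0$, and $\jb{n_2}^{-s}\le 1$. The quantity to bound is then dominated by
\begin{align*}
\jb{n}^{-1}\sum_{n_1+n_2=n} a(n_1)\,|\widehat g(n_2)| \le \jb{n}^{-1}\sum_{n_1+n_2=n}a(n_1)b(n_2).
\end{align*}
This is the single place where $s\ge0$ is used.

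By Cauchy--Schwarz in $n_1$, for each $n$ we get
\begin{align*}
\sum_{n_1+n_2=n}a(n_1)b(n_2)\le \|a\|_{\ell^2}\|b\|_{\ell^2}.
\end{align*}
Hence the expression above is at most $\jb{n}^{-1}\|a\|_{\ell^2}\|b\|_{\ell^2}$. Since $\jb{n}^{-1}\in\ell^2(\Z)$, taking the $\ell^2_n$ norm gives
\begin{align*}
\|\varphi(\dx)(fg)\|_{H^s}\les \|f\|_{H^s}\|g\|_{H^s}.
\end{align*}
The other region $|n_2|\ge|n_1|$ is handled identically with the roles of $f$ and $g$ swapped.

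The argument is short, and the only point needing care is the transfer of the weight $\jb{n}^s$ onto the higher-frequency factor. This transfer requires $s\ge0$. For $s<0$ it fails, which is consistent with the ill-posedness below $L^2$. What makes the estimate work is that the full derivative gain from $\varphi(\dx)$, namely decay $\jb{n}^{-1}$, is exactly what is needed to make the output $\ell^2$-summable after applying Young/Cauchy--Schwarz. The convolution itself gives only an $\ell^\infty$ bound, i.e.\ $L^1\cdot L^1$ type control at the level of $\ell^2\ast\ell^2\subset\ell^\infty$.
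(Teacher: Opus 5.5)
Your proof is correct: putting $\jb{n}^{s}$ on the higher-frequency factor (which is where $s\geq 0$ enters), bounding the convolution in $\ell^\infty_n$ by Cauchy--Schwarz, and then using $\jb{n}^{-1}\in\ell^2(\Z)$ is the standard argument for this estimate. The paper gives no proof of its own and simply cites Bona--Tzvetkov and Roum\'egoux, whose argument is essentially the same as yours. One slip in your closing heuristic: $\ell^2\ast\ell^2\subset\ell^\infty$ is the Fourier-side form of $L^2\cdot L^2\subset L^1$, not ``$L^1\cdot L^1$''.
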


We also need the following paraproduct estimate:

\begin{lemma} [{\cite[Lemma 3.4]{GKO}}] \label{LEM:negprod} Let $0\leq s\leq 1$ and suppose that $1<p,q,r<\infty$ satisfy $\frac{1}{p}+\frac{1}{q}=\frac{1}{r}+s$. Then, we have 
\begin{align*}
\| \jb{\dx}^{-s}(fg)\|_{L^{r}(\T)}\lesssim \| \jb{\dx}^{-s}f\|_{L^{p}(\T)}\| \jb{\dx}^{s}g\|_{L^{q}(\T)}. 
\end{align*}
\end{lemma}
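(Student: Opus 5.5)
The plan is to argue by duality, moving $\jb{\dx}^{-s}$ onto a test function, and then to close with the fractional Leibniz rule and Sobolev embedding on $\T$. Write $F:=\jb{\dx}^{-s}f$ and $G:=\jb{\dx}^{s}g$, so the right-hand side is $\|F\|_{L^p}\|G\|_{L^q}$. For $h\in L^{r'}(\T)$ with $\|h\|_{L^{r'}}\le 1$, set $H:=\jb{\dx}^{-s}h$. Then
\begin{align*}
\int_{\T}\jb{\dx}^{-s}(fg)\,h\,dx=\int_{\T} f\,g\,H\,dx=\int_{\T}F\,\jb{\dx}^{s}(gH)\,dx ,
\end{align*}
since $\jb{\dx}^{\pm s}$ are self-adjoint Fourier multipliers. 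By H\"older, it therefore suffices to prove
\begin{align*}
\|\jb{\dx}^{s}(gH)\|_{L^{p'}}\les \|\jb{\dx}^{s}g\|_{L^q}\,\|h\|_{L^{r'}} .
\end{align*}

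For this bound I apply the Kato--Ponce fractional Leibniz rule on $\T$ with $0\le s\le 1$ and exponents in $(1,\infty)$:
\begin{align*}
\|\jb{\dx}^{s}(gH)\|_{L^{p'}}\les \|\jb{\dx}^{s}g\|_{L^{q}}\|H\|_{L^{a}}+\|g\|_{L^{b}}\|\jb{\dx}^{s}H\|_{L^{r'}},
\end{align*}
where $\frac1{p'}=\frac1q+\frac1a=\frac1b+\frac1{r'}$. The scaling relation $\frac1p+\frac1q=\frac1r+s$ gives exactly
\begin{align*}
\frac1a=\frac1{r'}-s=\frac1q-\frac1p \qquad\text{and}\qquad \frac1b=\frac1q-s=\frac1r-\frac1p .
\end{align*}
Sobolev embedding then yields $\|H\|_{L^a}=\|\jb{\dx}^{-s}h\|_{L^a}\les\|h\|_{L^{r'}}$ and $\|g\|_{L^b}\les\|\jb{\dx}^{s}g\|_{L^q}$, while trivially $\|\jb{\dx}^{s}H\|_{L^{r'}}=\|h\|_{L^{r'}}$. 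Taking the supremum over $h$ gives the lemma. (One could instead decompose $fg$ into Littlewood--Paley paraproducts $f_{\text{lo}}g_{\text{hi}}+f_{\text{hi}}g_{\text{lo}}+f_{\text{hi}}g_{\text{hi}}$, but the duality route is shorter.)

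The main obstacle is the bookkeeping of exponents. Sobolev embedding as used above needs $\frac1a,\frac1b\in(0,1)$, i.e.\ $p>q$ and $p>r$, and this need not hold in the stated range.
\begin{itemize}
\item If $\frac1b\le 0$, i.e.\ $sq\ge 1$, I would exploit that $\T$ has finite measure: H\"older with $\le$ in the exponents is allowed there, and for $sq>1$ one has $W^{s,q}\subset L^\infty$. So I would take $b=\infty$ and accept a slightly smaller exponent in the product. The case $\frac1a\le 0$ is treated the same way.
\item The genuine borderline is $sq=1$ (or $sr'=1$), where $W^{s,q}\not\subset L^\infty$.
\end{itemize}
For the borderline I would first try to reserve a tiny amount of integrability from the companion factor and interpolate. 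If that does not close, I would fall back on the paraproduct decomposition: estimate the high--low pieces with the Fefferman--Stein vector-valued maximal inequality, and the high--high piece, which is where the smoothing $\jb{\dx}^{-s}$ is actually used, by summing dyadic blocks with Bernstein's inequality, using the slack in the hypothesis $1<p,q,r<\infty$.
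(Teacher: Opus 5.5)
Your first two steps (duality, Kato--Ponce at exponent $p'$, then two Sobolev embeddings) are the standard argument for this estimate. The paper gives no proof of its own, only the citation. Your argument is complete exactly when both Sobolev exponents are finite:
\[
\frac1a=\frac1{r'}-s=1-\frac1p-\frac1q>0, \qquad \frac1b=\frac1q-s=\frac1r-\frac1p>0,
\]
that is, when $sr'<1$ and $sq<1$. Your formula $\frac1a=\frac1q-\frac1p$ is a slip: the correct condition is $\frac1p+\frac1q<1$, not $p>q$.

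The genuine gap is your last paragraph. The cases $sq\ge 1$ and $sr'\ge 1$ cannot be patched, because the inequality is false there.

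\textbf{Why your patches fail.} In your $b=\infty$ fix, the second Kato--Ponce term becomes $\|g\|_{L^\infty}\|h\|_{L^{p'}}$. But $sq>1$ means $\frac1r<\frac1p$, i.e.\ $p'>r'$. Finite measure only lets you \emph{lower} exponents, so $\|h\|_{L^{p'}}$ is not controlled by $\|h\|_{L^{r'}}$. The $a=\infty$ fix fails the same way, since $sr'>1$ means $q<p'$.

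\textbf{Why no fix exists.} Two tests rule out the open ranges. Taking $g\equiv 1$ turns the lemma into $\|F\|_{L^r}\les\|F\|_{L^p}$ for all $F=\jb{\dx}^{-s}f$, which needs $r\le p$, i.e.\ $sq\le 1$. Integrating $\jb{\dx}^{-s}(fg)$ against the constant $1$ turns it into $|\int_\T FG\,dx|\les\|F\|_{L^p}\|G\|_{L^q}$ with $G=\jb{\dx}^s g$, which needs $\frac1p+\frac1q\le 1$, i.e.\ $sr'\le 1$.

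The borderline fails too. Your own duality computation shows the lemma is equivalent to
\[
\|gH\|_{W^{s,p'}}\les\|g\|_{W^{s,q}}\|H\|_{W^{s,r'}} .
\]
Suppose $sq=1$, so $0<s<1$ and $p=r$. Then every $g\in W^{s,1/s}(\T)$ would be a multiplier on $W^{s,r'}$ with norm at most $C\|g\|_{W^{s,1/s}}$. Applying it $k$ times to the constant $1$ gives $\|g\|_{L^{kr'}}^k=\|g^k\|_{L^{r'}}\les (C\|g\|_{W^{s,1/s}})^k$, with implicit constant independent of $k$. Letting $k\to\infty$ gives $\|g\|_{L^\infty}\le C\|g\|_{W^{s,1/s}}$. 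This contradicts the failure of $W^{s,1/s}(\T)\subset L^\infty(\T)$. For instance, with $s=\frac12$ and $p=q=r=2$ the lemma would say that $H^{1/2}(\T)$ is an algebra. The case $sr'=1$ is the same argument with $g$ and $H$ swapped.

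So no interpolation or paraproduct refinement can close these cases. The statement must be restricted to $sq<1$ and $sr'<1$ (equivalently $r<p$ and $\frac1p+\frac1q<1$), and in that range your first two paragraphs already form a complete proof.

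In the paper's applications \eqref{zest1} and \eqref{z2est}, one has $q=r=2$, so these conditions reduce to $s<\frac12$. Only the endpoint $\sigma=\frac12$ of \eqref{zest1} is affected.
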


\subsection{Probabilistic tools}

We only need the following version of the Wiener chaose estimate for Wiener chaoses of order at most 2. See for instance \cite[Proposition 2.4]{ThomTzvet} for more details. 

\begin{lemma}[Wiener chaos estimate]\label{Lemma:wienerchaos}  
For any $2\leq p<\infty$ and $a\in \l^2(\Z)$ and $b\in \l^2(\Z^2)$, it holds that 
\begin{align}
 \bigg\| \sum_{n\in \Z}a(n)g_{n}(\o)  \bigg\|_{L^{p}(\Omega)} &\lesssim p^{\frac{1}{2}}\|a(n)\|_{\l^{2}_{n}}. \label{wienerchaos} \\
\bigg\|  \sum_{n_1,n_2} b(n_1,n_2) g_{n_1}g_{n_2}  \bigg\|_{L^{p}(\O)} &\les p  \bigg\|  \sum_{n_1,n_2} b(n_1,n_2) g_{n_1}g_{n_2}  \bigg\|_{L^{2}(\O)}
\end{align}
where the family $\{g_n\}_{n\in \Z}$ are as in \eqref{initialdata}.
\end{lemma}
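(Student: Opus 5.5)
The plan is to reduce both estimates to standard Gaussian hypercontractivity, that is, to moment comparison for Gaussian polynomials of bounded degree. I will treat the linear and the quadratic case separately.

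\emph{Linear estimate.} Set $X=\sum_n a(n)g_n$. First I use the constraint $g_n=\cj{g_{-n}}$ to regroup the sum over pairs $\{n,-n\}$. Up to a factor $2$, this writes $\mathrm{Re}\,X$ and $\mathrm{Im}\,X$ each as a sum of independent real centered Gaussians. Hence each is a real centered Gaussian with variance at most $C\|a\|_{\l^2}^2$; the $n=0$ term is real and is handled directly. For a real centered Gaussian $Y$ with variance $\s^2$, we have $\|Y\|_{L^p(\O)}=\s\,(\E|Z|^p)^{1/p}$ with $Z\sim\mathcal{N}(0,1)$. The formula $\E|Z|^p=2^{p/2}\Gamma(\frac{p+1}{2})/\sqrt{\pi}$ and Stirling's formula give $(\E|Z|^p)^{1/p}\les p^{1/2}$. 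Combining the real and imaginary parts with the triangle inequality yields \eqref{wienerchaos}.

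\emph{Quadratic estimate.} Set $Q=\sum_{n_1,n_2} b(n_1,n_2)g_{n_1}g_{n_2}$. I decompose $Q=\E Q+(Q-\E Q)$. The constant $\E Q$ is a diagonal contribution, and the centered part lies in the homogeneous second Wiener chaos generated by the real Gaussians $\mathrm{Re}\,g_n$, $\mathrm{Im}\,g_n$. Here $Q$ is a polynomial of degree at most $2$ in these real Gaussians, so it lies in the inhomogeneous chaos $\bigoplus_{k\le 2}\mathcal{H}_k$.

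I then invoke Nelson's hypercontractivity for the Ornstein--Uhlenbeck semigroup $P_t$. For $p\ge2$ and $e^{2t}=p-1$, it gives $\|P_tF\|_{L^p}\le\|F\|_{L^2}$. Since $P_t$ acts on $\mathcal{H}_k$ as $e^{-kt}$, applying this to $F=\sum_{k\le2}e^{kt}\pi_kQ$, where $\pi_k$ is the chaos projection, gives
\[
\|Q\|_{L^p}\le \Big(\sum_{k\le 2}(p-1)^{k}\|\pi_kQ\|_{L^2}^2\Big)^{1/2}\le (p-1)\|Q\|_{L^2}\les p\,\|Q\|_{L^2}.
\]
The last step uses the $L^2$-orthogonality of the chaoses.

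The main obstacle is purely bookkeeping. The $g_n$ are complex with the reality constraint $g_n=\cj{g_{-n}}$, so the pairs $g_{n_1}g_{n_2}$ with $n_2=-n_1$ produce $|g_{n_1}|^2$. These terms have nonzero mean and must be split into a constant plus a second-chaos piece. Moreover, the real coordinates indexed by $n$ and $-n$ are not independent. I would resolve this by fixing an orthonormal real Gaussian family $\{\mathrm{Re}\,g_n,\mathrm{Im}\,g_n\}_{n\ge0}$, suitably normalised, and rewriting everything in it. Once $Q$ is seen to be a degree-$2$ polynomial in this family, hypercontractivity applies with no further conditions on $b$, and the bound is in terms of $\|Q\|_{L^2}$ as stated. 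For the linear case this rewriting likewise confirms that no additional independence structure beyond a Gaussian vector is needed.
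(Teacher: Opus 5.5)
Your proposal is correct and matches the standard argument. The paper gives no proof of its own but cites \cite[Proposition 2.4]{ThomTzvet}, which is proved by Ornstein--Uhlenbeck hypercontractivity on the chaos of order at most $k$ after rewriting the $g_n$ in terms of independent real Gaussians, exactly as you do; your explicit Gaussian-moment computation for the linear case is just the $k=1$ instance. The one caveat is that for general $b\in\ell^2(\Z^2)$ the diagonal sum $\sum_n b(n,-n)=\E Q$ need not converge, so the estimate should be read for finitely supported $b$, or whenever the series converges in $L^2(\Omega)$. Your argument covers that case too, since $\bigoplus_{k\le 2}\mathcal{H}_k$ is closed in $L^2(\Omega)$.
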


We now study the regularity and integrability properties of the random initial data \eqref{initialdata} and the bilinear term $\NN(z)$ given in \eqref{quadraticobject}.

\begin{proposition}\label{prop:stochobjects} 
Let $\frac 14 < \al \leq \frac 12$,
\begin{align*}
s_1 <\al-\tfrac 12 \quad \text{and} \quad s_2<2\al.
\end{align*}
 Let $\{ \rho_k \}_{k\in \N}$ be a family of mollifiers on $\T$ and let $u_{0,k} =u_0^{\o}\ast \rho_k$, where $u_0^{\o}$ is as in \eqref{initialdata}. 
Then, 
\begin{align*}
(u_{0,k}, \NN(u_{0,k})) \longrightarrow (u_0, \NN(u_0)),
\end{align*}
as $k \to \infty$ in $L^{q}(\O;  W^{s_1,\infty}(\T)\times W^{s_2,\infty}(\T))$ for any $q\geq 1$ and almost surely in $ W^{s_1,\infty}(\T)\times W^{s_2,\infty}(\T)$. Moreover, the limit $(u_0,\NN(u_0))$ is independent of the choice of mollification kernel $\rho$, including the regularisation by the (non-smooth) Dirichlet projection $\P_{\leq N}$.
Furthermore, there exist $C,C'>0$ such that 
\begin{align}
\begin{split}
\prob \bigg( \|u_0\|_{H^{\al-\frac 12-}}+ \sup_{M\in 2^{\N}} &\big[ M^{\al-\frac 12-s_1-} \| \P_{> M}u_0\|_{W^{s_1,\infty}(\T)}  \\
&+  M^{2\al-s_2-} \|\NN( \P_{> M}u_0)\|_{H^{s_2}(\T)} \big]  >L \bigg) \leq C'e^{-CL}
\end{split}
\label{probM}
\end{align}
 for any $L >0$. 
\end{proposition}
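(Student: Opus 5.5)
The plan is the standard route: compute second moments on the Fourier side, upgrade them to $L^p(\O)$ bounds with Lemma~\ref{Lemma:wienerchaos}, pass from $L^p$ in $x$ to $W^{s,\infty}$ by Sobolev embedding, and obtain almost sure convergence and the uniform-in-$M$ tail bound by Borel--Cantelli and a dyadic union bound.

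\emph{Linear term.} Write $\ft{u_{0,k}}(n)=\ft\rho_k(n)g_n\jb n^{-\al}$. For fixed $x$, \eqref{wienerchaos} gives
\[
\E|\jb{\dx}^{s_1'}(u_{0,k}-u_0)(x)|^p\les p^{p/2}\Big(\sum_n |1-\ft\rho_k(n)|^2\jb n^{2s_1'-2\al}\Big)^{p/2}.
\]
This tends to $0$ by dominated convergence whenever $s_1'<\al-\tfrac12$. Take $s_1<s_1'<\al-\tfrac12$ and $p$ large, so that $W^{s_1',p}\subset W^{s_1,\infty}$. Integrating in $x$ then gives convergence in $L^q(\O;W^{s_1,\infty})$.

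The same computation with the multiplier $\mathbf 1_{|n|>M}$ gives
\[
\big\|\,\|\P_{>M}u_0\|_{W^{s_1,\infty}}\big\|_{L^p(\O)}\les p^{1/2}M^{s_1-\al+\frac12+\delta}.
\]
Here $\delta>0$ accounts for the embedding loss, and the sum over $|n|>M$ of $\jb n^{2s_1'-2\al}$ is of size $M^{2s_1'-2\al+1}$. The bound on $\|u_0\|_{H^{\al-\frac12-}}$ is analogous.

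\emph{Quadratic term.} Expand
\[
\ft{\NN(f)}(n)=\frac{in}{1+n^2}\sum_{n_1+n_2=n}\ft f(n_1)\ft f(n_2),\qquad f=\P_{>M}u_0.
\]
Because $\vp$ vanishes at $n=0$, the pairing $n_1=-n_2$ contributes nothing, so there is no renormalization. The remaining terms are second chaos, apart from diagonal corrections that also vanish for $n\neq 0$.

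By Wick's theorem,
\[
\E|\ft{\NN(f)}(n)|^2\les \jb n^{-2}\sum_{n_1+n_2=n,\ |n_j|>M}\jb{n_1}^{-2\al}\jb{n_2}^{-2\al}.
\]
Since $\al>\tfrac14$, i.e.\ $4\al>1$, this sum is bounded by $\jb n^{1-4\al}$ when $|n|\ges M$, and by $M^{1-4\al}$ when $|n|\ll M$ (both frequencies are then $\sim$ or $>M$). Hence
\[
\E\|\NN(f)\|_{H^{s_2}}^2\les\sum_n\jb n^{2s_2-2}\min(\jb n^{1-4\al},M^{1-4\al})+M^{1-4\al}\sum_{|n|\les M}\jb n^{2s_2-2}\les M^{2s_2-4\al+}.
\]
The first sum converges at $n\sim M$ precisely because $2s_2-1-4\al<-1$, i.e.\ $s_2<2\al$. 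The same bound with $s_2$ replaced by $s_2+\delta$, combined with the embedding $H^{s_2+\frac12+}\subset W^{s_2,\infty}$, would lose half a derivative. To avoid this, estimate $\jb{\dx}^{s_2}\NN$ pointwise in $x$ in $L^p(\O)$ (the variance computation is uniform in $x$), and then embed $W^{s_2',p}\subset W^{s_2,\infty}$ with $s_2<s_2'<2\al$. The Wiener chaos estimate upgrades everything to $L^p(\O)$ with a constant $p$.

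Convergence of $\NN(u_{0,k})$ follows identically, with the multiplier $\ft\rho_k(n_1)\ft\rho_k(n_2)-1$ and dominated convergence. Since every limit equals the one obtained from the Dirichlet projection, the limit is independent of $\rho$. Almost sure convergence follows by passing to a subsequence, or by Borel--Cantelli along a fast sequence.

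\emph{Tail bound.} Set
\[
X_M=M^{\al-\frac12-s_1-}\|\P_{>M}u_0\|_{W^{s_1,\infty}}+M^{2\al-s_2-}\|\NN(\P_{>M}u_0)\|_{H^{s_2}},
\]
where the exponents keep a small $\eps'$ margin below the decay rates established above. Then $\|X_M\|_{L^p(\O)}\les pM^{-\eps'}$. Chebyshev with the optimal choice $p\sim L$ yields $\prob(X_M>L)\les e^{-cLM^{\eps'}}$. Summing over dyadic $M$ gives $\sum_M e^{-cLM^{\eps'}}\les e^{-cL}$, and the same holds for the $u_0$ term.

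The main obstacle is the quadratic variance count: controlling the convolution sum uniformly in the high-frequency cutoff $M$ in both regimes $|n|\ll M$ and $|n|\gtrsim M$, and checking that $\al>\tfrac14$ and $s_2<2\al$ are exactly what make it summable with the gain $M^{s_2-2\al+}$.
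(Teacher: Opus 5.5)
Your overall route is the paper's (Fourier-side second moments, Lemma~\ref{Lemma:wienerchaos}, Sobolev embedding, summation over dyadic $M$). However, the step you single out as the crux fails, and the paper's proof fails at the same place.

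\textbf{The failing step.} In your display you bound $M^{1-4\al}\sum_{|n|\les M}\jb n^{2s_2-2}$ by $M^{2s_2-4\al+}$. The same applies to the part $|n|<M$ of your first sum, where $\min(\jb n^{1-4\al},M^{1-4\al})=M^{1-4\al}$. This bound holds only for $s_2\geq\frac12$. When $s_2<\frac12$, the series $\sum_{n\neq0}\jb n^{2s_2-2}$ converges and is dominated by $|n|\sim1$. So this contribution is $\sim M^{1-4\al}$, which exceeds $M^{2s_2-4\al}$ by the factor $M^{1-2s_2}$. The paper's bound $\sum_n\jb n^{2s_2-2}\max(|n|,M)^{1-4\al}\les M^{2s_2-4\al}$, in the case $|n_1|\sim|n_2|$, has the same defect.

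\textbf{The statement itself fails for $s_2<\frac12$.} Take the single mode $n=1$: $\E|\ft{\NN(\P_{>M}u_0)}(1)|^2\sim\sum_{|n_1|>M}\jb{n_1}^{-4\al}\sim M^{1-4\al}$. This coefficient lies in the second Wiener chaos, so Lemma~\ref{Lemma:wienerchaos} with $p=4$ and the Paley--Zygmund inequality give $|\ft{\NN(\P_{>M}u_0)}(1)|\ges M^{\frac12-2\al}$ on an event of probability at least $c>0$, uniformly in $M$. On that event $M^{2\al-s_2-\eps}\|\NN(\P_{>M}u_0)\|_{H^{s_2}}\ges M^{\frac12-s_2-\eps}$. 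Hence for $s_2<\frac12$ the probability in \eqref{probM} stays $\geq c$ for every $L$, so \eqref{probM} as stated cannot be proved.

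\textbf{What your argument does prove.} Your computation gives $\E\|\NN(\P_{>M}u_0)\|_{H^{s_2}}^2\les M^{\max(2s_2,1)-4\al+}$. That is, \eqref{probM} holds with the weight $M^{2\al-s_2-}$ replaced by $M^{2\al-\max(s_2,\frac12)-}$. Since $\al>\frac14$, this is still a positive power of $M$, which is all that is used downstream:
\begin{itemize}
\item Proposition~\ref{PROP:randomHIGH} goes through with $a_2=2\al-\max(\s,\frac12)-$.
\item This gives $\|v_M\|_{C([0,2T];H^{\s})}\le M^{-2\al+\max(\s,\frac12)+}$ in place of \eqref{vMdecay}.
\item That rate still makes the corresponding term in \eqref{Cond2} small for large $M$.
\end{itemize}
You should state and prove this corrected rate. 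The rest of your proposal is fine: the linear term, working pointwise in $x$ to avoid the half-derivative loss, Chebyshev with $p\sim L$, and the dyadic sum.

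\textbf{Minor point.} Passing to a subsequence does not give almost sure convergence of the full sequence $k\to\infty$. Instead, use a quantitative rate. For example, for $\rho_k=k\rho(k\,\cdot)$ one has $|1-\ft\rho_k(n)|\les\min(1,|n|/k)$, which gives an $L^p(\O)$ rate $k^{-\delta}$. Then apply Borel--Cantelli over all $k$ with $p\delta>1$.
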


\begin{proof}
The entire statement regarding convergence of mollified sequences and independence of the limits is proved in \cite[Proposition 2.6]{FBBM}. The only new part is the estimate on the event in \eqref{probM}. The statements for $\P_{> M}u_0$ follow from Sobolev embedding and the estimate 
\begin{align*}
\sum_{|n|>M} \frac{\jb{n}^{2s_1}}{\jb{n}^{2\al}} \les M^{2s_1-2\al+1-}.
\end{align*}
 For $\NN(\P_{> M}u_0)$, we provide details only for the second moment, as higher moments, and thence, the exponential decay follow from Lemma~\ref{Lemma:wienerchaos}. We have 
\begin{align*}
 \E \big[ \| \NN(\P_{> M}u_0)\|_{H^{s_2}}^{2}    \big] &= \sum_{n\neq 0} \jb{n}^{2s_2} |\vp(n)|^2 \E \bigg[ \bigg| \sum_{ \substack{n=n_1+n_2 \\ |n_1|, |n_2|> M}} \frac{g_{n_1}g_{n_2}}{\jb{n_1}^{\al} \jb{n_2}^{\al}} \bigg|^{2} \bigg] \\
 & \sim \sum_{n\neq 0} \jb{n}^{2s_2-2}\sum_{ \substack{n=n_1+n_2 \\ |n_1|, |n_2|> M}}  \frac{1}{\jb{n_1}^{2\al} \jb{n_2}^{2\al}},
\end{align*}
where in evaluating the expectation, we crucially used that fact that $n_1+n_2 \neq 0$ since  $\vp(0)=0$. We consider two sub-cases in the above summation (assuming by symmetry that $|n_1|\geq |n_2|$). If $|n_1|\sim |n_2|$, then since $\al>\frac 14$, we get 
\begin{align*}
\sum_{n\neq 0} \jb{n}^{2s_2-2}\sum_{ \substack{n=n_1+n_2 \\ |n_1|\sim |n_2|> M}}  \frac{1}{\jb{n_1}^{2\al} \jb{n_2}^{2\al}} \les \sum_{n} \frac{\jb{n}^{2s_2-2}}{ \max(|n|, M)^{4\al-1}}  \les M^{2s_2-4\al}.
\end{align*}
On the the other hand, if $|n_1|\gg |n_2|$, then we have 
\begin{align*}
\sum_{n\neq 0} \jb{n}^{2s_2-2}\sum_{ \substack{n=n_1+n_2 \\ |n_1|\gg |n_2|> M}}  \frac{1}{\jb{n_1}^{2\al} \jb{n_2}^{2\al}} &\les \sum_{|n|\ges M} \jb{n}^{2s_2-2-2\al} \sum_{|n_2|\ll |n|} \frac{1}{\jb{n_2}^{2\al}}  \\ 
&\les  \sum_{|n|\ges M} \jb{n}^{2s_2-1-4\al} \les M^{2s_2 -4\al}.
\end{align*}
Then, in order to obtain the supremum over $M$, we use that for $M\in 2^{\N}$, it is enough to convert the supremum to a summation at the cost of a logarithm which explains the worse rates in \eqref{probM}.
This completes the proof.
\end{proof}

\section{Modified probabilistic local theory }\label{section:localt}

For the rest of the article, we restrict ourselves to the remaining regime $\frac 14<\al \leq \frac 12$ not handled in \cite{BonaTzvet}.
We first provide an abstract well-posedness for a perturbed BBM equation. In order to reach any target time $T>0$, we enforce that the perturbations themselves can be chosen sufficiently small.

\begin{proposition}\label{prop:detHIGH} 
Fix $T>0$, $0<\s<1$, $L\geq 1$, $a_1>0$ and $0<a_3<a_2$. Then, there exists $M_0=M_0(L,T,a_1,a_2,a_3)\geq 1$ such that for every $M\geq M_0$ the following holds:
For every pair of forcings $(z_1,z_2)$ satisfying 
 \begin{align}
\|z_{1}\|_{W^{-s_1,\frac{1}{s_1}}( \T)} \leq L M^{-a_1}    \quad \text{and} \quad \|z_{2}\|_{H^{\s}( \T)}\leq LM^{-a_2}, \label{Kcond}
\end{align} 
where $s_1:= \min(1-\s,\s)$, 
 there exists a unique solution $v\in C([0,T]; H^{\s}(\T))$ to the initial value problem:
 \begin{align}
\begin{cases}
\partial_{t}v =  -\tfrac{1}{2}\vp(\dx) (  v^2 +2z_1 v  )  -\tfrac{1}{2}z_2 , \\
v\vert_{t = 0} =0,
\end{cases} 
\label{BBMz1z2}
\end{align}
which satisfies 
\begin{align}
\| v\|_{C([0,T];H^{\s})} \leq M^{-a_3}.
\end{align}
\end{proposition}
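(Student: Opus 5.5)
We run a contraction mapping argument for the Duhamel formulation
\begin{align*}
\Gamma v(t) := -\tfrac12\int_0^t \vp(\dx)\big(v^2+2z_1 v\big)(t')\,dt' - \tfrac{t}{2} z_2
\end{align*}
directly on the whole interval $[0,T]$. The space is the ball
\begin{align*}
B:=\{v\in C([0,T];H^\s(\T)) : \|v\|_{C_TH^\s}\le M^{-a_3}\}.
\end{align*}
Since there is no linear propagator, no Strichartz or $X^{s,b}$ structure is needed. All estimates are fixed-time estimates in $H^\s$, integrated trivially in time, which costs a factor $T$. Smallness comes entirely from the powers of $M$ in \eqref{Kcond}, and $M_0$ is chosen so that the factors of $T$ and $L$ are absorbed.

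The first ingredient is the quadratic term. By Lemma~\ref{LEM:BT} with $s=\s\ge0$,
\begin{align*}
\|\vp(\dx)(v^2)\|_{H^\s}\les \|v\|_{H^\s}^2 .
\end{align*}
The second ingredient is the rough forcing term $\vp(\dx)(z_1v)$, and I expect this to be the main step. Since $\vp(\dx)$ gains one derivative, it suffices to show
\begin{align*}
\|\jb{\dx}^{\s-1}(z_1 v)\|_{L^2}\les \|z_1\|_{W^{-s_1,1/s_1}}\|v\|_{H^\s}.
\end{align*}
I would split into the two cases $s_1=1-\s$ (when $\s\ge\frac12$) and $s_1=\s$ (when $\s\le\frac12$).

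In the case $s_1=1-\s$, apply Lemma~\ref{LEM:negprod} with $s=1-\s$, $r=2$, $p=1/s_1$ and $q$ given by $\frac1q=\frac12+(1-\s)-(1-\s)=\frac12$, i.e.\ $q=2$. This gives $\|\jb{\dx}^{-(1-\s)}(z_1v)\|_{L^2}\les \|z_1\|_{W^{-(1-\s),1/(1-\s)}}\|\jb{\dx}^{1-\s}v\|_{L^2}$. That uses $H^{1-\s}\supset H^\s$, which is true since $1-\s\le\s$.

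In the case $s_1=\s$, I would weaken the left side using $\jb{\dx}^{\s-1}\le \jb{\dx}^{-\s}$ (valid as $\s\le\frac12$) and use Lemma~\ref{LEM:negprod} with $s=\s$, $p=1/\s$, $q=r=2$.

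The endpoint $p=1/s_1$ must satisfy $1<p<\infty$, which holds for $0<\s<1$. At $\s=\frac12$ both cases coincide. I would double-check the Hölder exponent bookkeeping here, since this is exactly where the choice of $s_1$ in the statement is dictated.

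Combining, for $v,\tilde v\in B$ we get
\begin{align*}
\|\Gamma v\|_{C_TH^\s}&\le C T\big(M^{-2a_3}+LM^{-a_1-a_3}+LM^{-a_2}\big),\\
\|\Gamma v-\Gamma\tilde v\|_{C_TH^\s}&\le CT\big(2M^{-a_3}+2LM^{-a_1}\big)\|v-\tilde v\|_{C_TH^\s}.
\end{align*}
Continuity in time is clear, since the integrand lies in $L^\infty_tH^\s$.

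Because $a_3<a_2$ and $a_1,a_3>0$, each term in the first bound is at most $\frac13M^{-a_3}$, and the contraction constant is at most $\frac12$, once $M\ge M_0(L,T,a_1,a_2,a_3)$. The constant $C$ depends on $\s$ as well, which we regard as fixed.

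The Banach fixed point theorem then yields existence and uniqueness in $B$. Uniqueness in all of $C([0,T];H^\s)$ follows by the standard argument: two solutions agree on a short interval by the same Lipschitz estimate applied in a large ball, and one iterates in time.
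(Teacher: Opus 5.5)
Your proposal is correct and is essentially the paper's proof. It is a contraction for the Duhamel map on the ball of radius $M^{-a_3}$ in $C([0,T];H^{\s})$, with $M_0$ chosen to absorb $T$ and $L$. It uses Lemma~\ref{LEM:BT} for $\vp(\dx)(v^2)$ and Lemma~\ref{LEM:negprod} for $\vp(\dx)(z_1v)$, split into the same two cases: $\s\le\frac12$ via $\jb{\dx}^{\s-1}\le\jb{\dx}^{-\s}$, and $\s\ge\frac12$ via $H^{\s}\subset H^{1-\s}$. Your version is slightly more careful than the written proof: it carries the correct factor $-\frac t2 z_2$ and upgrades uniqueness from the ball to all of $C([0,T];H^{\s})$.
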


\begin{proof}
We apply the contraction mapping argument to the integral formulation:
\begin{align*}
v(t) =\G_{t}[v]:= -\frac 12 \int_{0}^{t}   \vp(\dx) (  v^2 +2z_1 v  ) dt' + tz_2
\end{align*}
in the small ball 
\begin{align*}
B_{R} : = \{  v\in C([0,T];H^{\s}(\T)) \, : \, \|v\|_{C_{T}H^{\s}} \leq R\}
\end{align*}
where $R=M^{-a_3}$.
The main term to consider is the linear in $v$ term $\vp(\dx)[ z_1 v]$. When $0<\s\leq \frac 12$,  Lemma~\ref{LEM:negprod} implies
\begin{align}
\| \vp(\dx)[ z_1 v]\|_{H^{\s}} \les \| \jb{\dx}^{-(1-\s)}[ z_1 v]\|_{L^2} \les \| \jb{\dx}^{-\s}[ z_1 v]\|_{L^2} \les \| z_1\|_{W^{-\s, \frac{1}{\s}}} \| v\|_{H^{\s}}. \label{zest1}
\end{align}
Similarly, if $\frac 12<\s<1$, then Lemma~\ref{LEM:negprod} also yields
\begin{align}
\| \vp(\dx)[ z_1 v]\|_{H^{\s}} \les \| \jb{\dx}^{-(1-\s)}[ z_1 v]\|_{L^2} \les  \| z_1\|_{W^{-(1-\s), \frac{1}{1-\s}}} \| v\|_{H^{1-\s}} \les \| z_1\|_{W^{-(1-\s), \frac{1}{1-\s}}} \| v\|_{H^{\s}}. \label{z2est}
\end{align}
Applying these estimates and using Lemma~\ref{LEM:BT} and \eqref{Kcond}, we then find for $v\in B_{R}$ and $M\geq M_0$,
\begin{align*}
\| \G_{t}[v]&\|_{C_{T}H^{\s}} \leq CT \|v\|_{C_{T}H^{\s}}^{2}  +CT\|z_1\|_{W^{-s_1,\frac{1}{s_1}}}  \|v\|_{C_{T}H^{\s}} + CT \|z_2\|_{H^{\s}} \\
& \leq CTR^2 + CT LM^{-a_1}R + CTLM^{-a_2} \leq [CTM_0^{-a_3} + CT LM_0^{-a_1} + CTLM^{-a_2+a_3}]R.
\end{align*}
By choosing $M_0=M_0(T,L,a_1,a_2,a_3)\geq 1$ such that
\begin{align}
M_0 \geq (1+ 3CTL)^{\max( \frac{1}{a_3}, \frac{1}{a_1}, \frac{1}{a_2-a_3})}, \label{Mcond}
\end{align}
we see that $\G_{t}[v]:B_{R} \to B_{R}$. The condition \eqref{Mcond} also ensures that $\G_{t}[v]$ is a strict contraction on $B_{R}$, which then completes the proof by invoking the contraction mapping theorem.
\end{proof}

We now apply Proposition~\ref{prop:detHIGH} to the case when $z_1 = \P_{>M}u_0$ and $z_2=\NN(\P_{>M}u_0)$.

\begin{proposition}[Global well-posedness for $v_{M}$]\label{PROP:randomHIGH} 
Fix $T>0$, $\frac 14<\al \leq \frac 12$, $\frac 12-\al<\s<2\al$, and $L\geq 1$. Then, there exists an event $\Sigma_{L,\s}\subset \O$ satisfying $\PP(\Sigma_{L,\s}^{c}) \leq C'e^{-CL}$, such that for every $\o\in \Sigma_{L,\s}$,  there exists $M_0=M_0(L,T,\al,\s)\geq 1$ such that for every $M\geq M_0$, there is a unique solution $v_{M}\in C([0,2T];H^{\s}(\T))$ to the initial value problem:
 \begin{align}
\begin{cases}
\partial_{t}v_{M} =  -\tfrac{1}{2}\vp(\dx) (  v_{M}^2 +2(\P_{> M}u^{\o}_0) v_M  )  -\tfrac{1}{2}\NN(\P_{>M}u^{\o}_0) , \\
v_M\vert_{t = 0} =0,
\end{cases} 
\label{BBMvM}
\end{align}
which satisfies 
\begin{align}
\| v_{M}\|_{C([0,2T];H^{\s}(\T))} \leq M^{-2\al+\s+}, \label{vMdecay}
\end{align}
where $u_0^{\o}$ is as in \eqref{initialdata} and $\NN(\P_{>M}u_0^{\o})$ was constructed in Proposition~\ref{prop:stochobjects}.
\end{proposition}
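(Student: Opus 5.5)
The plan is to apply Proposition~\ref{prop:detHIGH} on the time interval $[0,2T]$ with $z_1=\P_{>M}u_0^{\o}$ and $z_2=\NN(\P_{>M}u_0^{\o})$. The exponents $a_1,a_2,a_3$ will be read off from the probabilistic bound \eqref{probM} of Proposition~\ref{prop:stochobjects}. Define $\Sigma_{L,\s}$ to be the complement of the event in \eqref{probM}, with $s_1$ and $s_2$ chosen as below. Then $\PP(\Sigma_{L,\s}^c)\le C'e^{-CL}$ is immediate.

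\textbf{Forcing $z_2$.} Take $s_2=\s<2\al$. On $\Sigma_{L,\s}$, \eqref{probM} gives
\[
\|\NN(\P_{>M}u_0)\|_{H^{\s}}\le LM^{-(2\al-\s-\eps)}
\]
for all dyadic $M$. So \eqref{Kcond} holds with $a_2=2\al-\s-\eps>0$. I take $a_3=2\al-\s-2\eps$, so that $0<a_3<a_2$. This yields exactly the decay $M^{-2\al+\s+}$ claimed in \eqref{vMdecay}.

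\textbf{Forcing $z_1$.} Recall $s_1:=\min(1-\s,\s)$. I need $\|\P_{>M}u_0\|_{W^{-s_1,1/s_1}}\le LM^{-a_1}$ for some $a_1>0$. Since $\T$ is compact, $\|f\|_{W^{-s_1,1/s_1}}\les\|f\|_{W^{-s_1,\infty}}$. So it suffices to apply \eqref{probM} with regularity index $-s_1$ in place of $s_1$; this is admissible provided $-s_1<\al-\tfrac12$, i.e.\ $s_1>\tfrac12-\al$. The two cases are:
\begin{itemize}
\item if $s_1=\s$, this is exactly the hypothesis $\s>\tfrac12-\al$;
\item if $s_1=1-\s$, it reads $\s<\tfrac12+\al$, which follows from $\s<2\al\le\tfrac12+\al$ since $\al\le\tfrac12$.
\end{itemize}
Then \eqref{probM} gives the decay $M^{-(s_1+\al-\frac12)+}$, so $a_1=s_1+\al-\tfrac12-\eps>0$ for $\eps$ small. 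This compatibility of the Sobolev index is the only point needing care. It is also where both constraints on $\s$ enter: the lower bound $\s>\tfrac12-\al$ is used here, while the upper bound $\s<2\al$ is used for $z_2$.

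\textbf{Conclusion.} For $\o\in\Sigma_{L,\s}$, Proposition~\ref{prop:detHIGH}, applied with $T$ replaced by $2T$, yields $M_0=M_0(L,T,\al,\s)$ such that for all $M\ge M_0$ there is a unique solution $v_M\in C([0,2T];H^\s)$ of \eqref{BBMvM} with $\|v_M\|\le M^{-a_3}=M^{-2\al+\s+}$. Two small points remain.
\begin{itemize}
\item Proposition~\ref{prop:detHIGH} requires $0<\s<1$. This holds because $\s<2\al\le1$ and $\s>\tfrac12-\al\ge0$.
\item Uniqueness there holds in the small ball. Uniqueness in the full space $C([0,2T];H^\s)$ follows by the standard continuity argument: the difference estimates from Lemma~\ref{LEM:BT} together with \eqref{zest1} and \eqref{z2est} give local Lipschitz bounds, which one iterates over short time intervals via Gronwall.
\end{itemize}
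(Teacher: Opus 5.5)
Your proposal is correct and is essentially the paper's proof: you build the same event from \eqref{probM}, with index $-\min(\s,1-\s)$ for $\P_{>M}u_0$ and $s_2=\s$ for $\NN(\P_{>M}u_0)$, make the same two-case check that $-\min(\s,1-\s)<\al-\frac12$, and apply Proposition~\ref{prop:detHIGH} with the same exponents $a_1=\al-\frac12+\min(\s,1-\s)-$ and $a_2=2\al-\s-$. The points you make explicit are all correct and are left implicit in the paper: working on $[0,2T]$, checking $0<\s<1$, the embedding $W^{-s_1,\infty}\subset W^{-s_1,1/s_1}$ on $\T$, and upgrading uniqueness from the small ball to all of $C([0,2T];H^{\s})$ via Gronwall.
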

\begin{proof}
Let 
\begin{align*}
\Sigma_{L,\s} : =  \bigg\{\o\in \O\,:\,  \sup_{M\in 2^{\N}} \big[ M^{\al-\frac 12-s_1-} \| \P_{> M}u^{\o}_0\|_{W^{s_1,\infty}(\T)} +  M^{2\al-\s-} \|\NN( \P_{> M}u^{\o}_0)\|_{H^{\s}(\T)} \big]  \leq L \bigg\}
\end{align*}
with $s_1 : = -\min(\s,1-\s)$.  
In order to apply Proposition~\ref{prop:stochobjects}, we need to check that $s_1 <\al-\frac 12$. 
If $\s<\frac 12$, then this imposes $\s>\frac 12-\al$, while if $\s\geq \frac 12$, we find $\s<\al+\frac 12$. Thus, \eqref{probM} yields that $\PP(\Sigma_{L,\s}^{c})\leq C' e^{-CL}$.
For  $\o \in \Sigma_{L,\s}$, we then
apply Proposition~\ref{prop:detHIGH}  with $z_1 =\P_{>M}u_0$ and $z_2 = \NN( \P_{> M}u_0)$ so that $a_1= \al -\frac 12-s_1->0$ (as we just checked that $s_1<\al-\frac 12$) and $a_2= 2\al-\s-$. 
\end{proof}

We now  provide a basic local well-posedness theory for the difference equation
\eqref{BBMwM}. As the variable $w_{M}$ carries initial data $\P_{\leq M}u_0$, which is \textit{not} small, the restriction to short times is necessary. The main goal in the next section is to establish an a priori bound on $w_{M}$ which allows to iterate the local theory for $w_{M}$ to cover the full time interval $[0,T]$.

\begin{lemma}[Local theory for $w_{M}$] \label{LEM:wM}
Let $T>0$, $\al, \s>0$ and the event $\Sigma_{L,\s}$ be as in Proposition~\ref{PROP:randomHIGH}. 
Then, for $\frac 12-\al<s<\al+\frac 12$, there exists another event $\Sigma^{(2)}_{L,T,s}\subset \Omega$ satisfying $\PP(( \Sigma^{(2)}_{L,T,s})^{c}) \leq C'e^{-CL^{2}}$ 
for each $\o\in \Sigma^{(3)}_{L,\s,s,T}:= \Sigma_{L,\s}\cap \Sigma^{(2)}_{L,s,T}$, and for each $M\geq M_0$, with $v_{M}$ as constructed in Proposition~\ref{PROP:randomHIGH} on $[0,2T]$,  there exists a random maximal time $0<T_{\max}^{\o}<T$ for which there exists a unique solution $w_{M}\in C([0,T_{\max}^{\o}); H^{s}(\T))$ to \eqref{BBMwM} with $w_{M}\vert_{t=0}=\P_{\leq M}u_0^{\o}$. 
\end{lemma}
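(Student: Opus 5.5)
The plan is a contraction mapping argument on a short interval for the integral formulation of \eqref{BBMwM}:
\begin{align*}
w_M(t) = \Phi[w_M](t):= \P_{\leq M}u_0 - \frac12 \int_0^t \vp(\dx)\big( w_M^2 + 2(\P_{>M}u_0 + v_M)w_M\big)\,dt',
\end{align*}
in a ball of $C([0,\delta];H^s(\T))$ of radius $2\|\P_{\leq M}u_0\|_{H^s}$. The maximal time is then obtained by the usual continuation argument. Since $\P_{\leq M}u_0$ is a finite trigonometric sum, it lies in $H^s$ for every $s$, with $\|\P_{\leq M}u_0\|_{H^s}\les M^{s-\al+\frac12+}\|u_0\|_{H^{\al-\frac12-}}$. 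I define $\Sigma^{(2)}_{L,s,T}$ as the event $\{\|u_0\|_{H^{\al-\frac12-}}\leq L\}$ intersected with the event where the bound on $\P_{>M}u_0$ in the norm $W^{-s_1',1/s_1'}$, with $s_1':=\min(s,1-s)$, holds with constant $L$. The tail bound for this event follows from \eqref{probM}, or from Fernique's theorem, which gives the Gaussian tail $e^{-CL^2}$.

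For the estimates, the quadratic term is handled by Lemma~\ref{LEM:BT}, which gives $\|\vp(\dx)(w^2)\|_{H^s}\les\|w\|_{H^s}^2$ since $s>0$. The term $\vp(\dx)(\P_{>M}u_0\, w)$ is treated exactly as in \eqref{zest1}--\eqref{z2est} with $\s$ replaced by $s$. Via Lemma~\ref{LEM:negprod}, this gives the bound $\|\P_{>M}u_0\|_{W^{-s_1',1/s_1'}}\|w\|_{H^s}$. The requirement that $\P_{>M}u_0$ lie in $W^{-s_1',1/s_1'}$ almost surely forces $-\min(s,1-s)<\al-\frac12$, which is precisely $\frac12-\al<s<\al+\frac12$. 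For the term $\vp(\dx)(v_M w)$, I use $\vp(\dx)=(1-\dx^2)^{-1}\dx$, which gains one derivative, together with $v_M\in C([0,2T];H^\s)$ from Proposition~\ref{PROP:randomHIGH} and $\s>\frac12-\al>0$. A product estimate then gives $\|\vp(\dx)(v_Mw)\|_{H^s}\les \|v_M\|_{H^\s}\|w\|_{H^s}$. If both $\s,s\geq0$ one can apply Lemma~\ref{LEM:BT} at regularity $\min(\s,s)$ and use the derivative gain; more carefully, one uses the duality/fractional Leibniz estimate $\|fg\|_{H^{s-1}}\les\|f\|_{H^\s}\|g\|_{H^s}$, valid when $\s+s>\frac12$ and $s-1\leq\min(\s,s)$. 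I expect this to hold in the admissible ranges, possibly after reducing $\s$.

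These bounds give
\begin{align*}
\|\Phi[w]\|_{C_\delta H^s}\leq \|\P_{\leq M}u_0\|_{H^s}+C\delta\big(\|w\|_{C_\delta H^s}^2 + (L+\|v_M\|_{C_{2T}H^\s})\|w\|_{C_\delta H^s}\big),
\end{align*}
together with the analogous difference estimate. Choosing $\delta\sim(1+L+\|\P_{\leq M}u_0\|_{H^s})^{-1}$ makes $\Phi$ a contraction, and local existence and uniqueness follow. Because $\delta$ depends only on the current $H^s$ norm of $w_M$ and on bounds valid on all of $[0,2T]$, the solution can be iterated. This defines $T^\o_{\max}$ as the supremum of existence times, truncated by $T$, along with the blow-up alternative that $\|w_M(t)\|_{H^s}\to\infty$ as $t\uparrow T_{\max}^\o$ whenever $T^\o_{\max}<T$. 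Uniqueness in $C([0,T_{\max});H^s)$ follows from the difference estimate on short intervals.

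The main obstacle is the forcing-linear term involving $\P_{>M}u_0$: this is the only place where the $s$-range restriction enters. I would handle it by checking the two cases $s\leq\frac12$ and $s>\frac12$ of Lemma~\ref{LEM:negprod}, mirroring the proof of Proposition~\ref{prop:detHIGH}.
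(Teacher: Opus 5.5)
Your proposal is correct and follows essentially the paper's own proof: a contraction argument for the Duhamel formulation in $C([0,\tau];H^s(\T))$, Bernstein for $\|\P_{\leq M}u_0\|_{H^s}$, Lemma~\ref{LEM:BT} for $w_M^2$, and \eqref{zest1}--\eqref{z2est} with $\s$ replaced by $s$ for $\P_{>M}u_0\, w_M$, which is exactly where the restriction $\frac12-\al<s<\al+\frac12$ enters. The one step to tidy is $\vp(\dx)(v_Mw_M)$: your stated condition $\s+s>\frac12$ can fail in the admissible range (e.g.\ $\al=\frac12$, $\s=s=0.1$), and shrinking $\s$ would not help, but no such condition is needed, since, as in the paper, $\|\vp(\dx)(v_Mw_M)\|_{H^s}\les\|v_Mw_M\|_{L^1}\le\|v_M\|_{L^2}\|w_M\|_{L^2}$ for $s<\frac12$ and $\|\vp(\dx)(v_Mw_M)\|_{H^s}\les\|v_Mw_M\|_{L^{2-}}\les\|v_M\|_{L^2}\|w_M\|_{H^s}$ for $s\geq\frac12$.
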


\begin{proof}
The main point is that to handle a different regularity parameter $s$, we add in the event 
\begin{align*}
\Sigma^{(2)}_{L,T,s} =  \bigg\{\o\in \O\,:\,  \sup_{M\in 2^{\N}} \big[ M^{\al-\frac 12-s_3-} \| \P_{> M}u^{\o}_0\|_{W^{s_3,\infty}(\T)} \leq L \bigg\}
\end{align*}
with $s_3 := -\min(1-s,s)$. Then, since $\frac 12-\al<s<\al+\frac 12$, Proposition~\ref{prop:stochobjects} applies to show that the complement of $\Sigma^{(2)}_{L,T,s}$  probability exponentially small in $L$. Then, on the event $ \Sigma^{(3)}_{L,\s,s,T}$, we set up and solve the integral formulation of \eqref{BBMwM} on a small interval $[0,\tau]$ for some $0<\tau <T$. More precisely, we consider the integral formulation
\begin{align*}
w_{M} (t) = \P_{\leq M}u_0 -\frac{1}{2}\int_{0}^{t} \vp(\dx)\big[ w_{M}^2 +2(\P_{>M}u_0+v_M)w_{M}\big]dt'.
\end{align*}
and run a contraction mapping argument in $C([0,\tau];H^{s}_{x}(\T))$. Using Bernstein's inequality, we have
\begin{align*}
\|\P_{\leq M}u_0\|_{H^{s}} \les M^{s+\frac 12-\al+} \|u_0\|_{H^{\al-\frac 12-}} \les M^{s_3+\frac 12-\al+}L.
\end{align*}
To handle the multiplication with $\P_{>M}u_0$, we use \eqref{zest1} and \eqref{z2est}. As for the multiplication by $v_{M}$, if $s<\frac 12$, we have 
\begin{align*}
\| \vp(\dx) [ v_M w_M] \|_{H^s_x} \les \|\jb{\dx}^{-\frac 12-}[v_{M}w_M]\|_{L^2} \les \|v_{M}\|_{L^2}\|w_{M}\|_{L^2}.
\end{align*}
If instead $s\geq \frac 12$, then we have 
\begin{align*}
\| \vp(\dx) [ v_M w_M] \|_{H^s_x}  \les \|v_{M}w_{M}\|_{L^{2-}_x} \les \|v_{M}\|_{L^2_x} \|w_{M}\|_{L^{\infty-}_x} \les \|v_{M}\|_{L^2_x} \|w_M\|_{H^{s}_x}.
\end{align*}
The improved upper bound $s<\al +\frac 12$ is due to the absence of the quadratic object $\NN(u_0)$ in \eqref{BBMwM}. Note that $\tau$, and hence, $T_{\max}^{\o}$, depend upon $M$.
\end{proof} 

\section{Probabilistic global theory on $\T$}\label{section:globalt}

The goal of this section is to establish that for some $M\in 2^{\N}$, we have an a-priori bound on  the growth  of $w_{M}$ with high-probability. More concretely, we aim to prove the following.

\begin{proposition}\label{prop:bdk}
 Let $\frac 14<\al\leq \frac 12$ and $\frac 12-\al<s<3\al-\frac 12$. Given $T, \e>0$, there exist $\tilde{\O}_{T,\e}\subset \O$ such that 
\begin{align*}
\prob( (\tilde{\O}_{T,\e})^{c}) <\e,
\end{align*}
 a large $M_1 (T,\eps) \geq M_0$, and a finite constant $C(T,\e)>0$ such that the following bound holds: 
\begin{align}
 \sup_{t\in [0,T]}\|w_{M_1}^{\o}(t)\|_{H^{s}(\T)} \leq C(T,\e), \label{bdvk}
\end{align}
for every solution $w_{M_1}^{\o}$ to \eqref{BBMwM} with $\o \in \tilde{\O}_{T,\eps}$.
\end{proposition}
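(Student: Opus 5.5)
We run the $I$-method on $w_M$, in the form sketched in the introduction. Fix $T,\e$ and choose $L=L(\e)$ so that $C'e^{-CL}+C'e^{-CL^2}<\e$. On this event we take
\[
\tilde\O_{T,\e}=\Sigma^{(3)}_{L,\s,s,T}\cap\{\|u_0\|_{H^{\al-\frac12-}}\le L\},
\]
with $\s$ fixed close to $2\al$. We set $M=N^{k}$ for a $k>1$ chosen below, and $N$ large depending on $(T,L)$ so that $M\ge M_0$. Then Proposition~\ref{PROP:randomHIGH} gives $v_M$ on $[0,2T]$ with \eqref{vMdecay}, and Lemma~\ref{LEM:wM} gives $w_M$ on $[0,T_{\max})$.

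We define $\EE(t)=\frac12\|Iw_M(t)\|_{H^1}^2$, with $I=I_N$ the multiplier \eqref{Imult} at regularity $s$. Since $\|w_M\|_{H^s}^2\les \EE$, the blow-up alternative of Lemma~\ref{LEM:wM} reduces \eqref{bdvk} to a bound on $\sup_{t<\min(T_{\max},T)}\EE(t)$. A continuity argument then forces $T_{\max}>T$.

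At the initial time we use Bernstein and $m_N\le1$:
\[
\EE(0)\les \|I\P_{\le M}u_0\|_{H^1}^2\les N^{2-2s}M^{2s+1-2\al+}L^2 .
\]
Now differentiate $\EE$ using \eqref{BBMwM} and the identity $(1-\dx^2)\vp(\dx)=\dx$:
\[
\tfrac{d}{dt}\EE=-\tfrac12\int \dx Iw\,\big[I(w^2)+2I((\P_{>M}u_0+v_M)w)\big]dx .
\]
We handle the three terms as follows.
\begin{itemize}
\item For the cubic term, we subtract $\int \dx Iw\,(Iw)^2=0$. The commutator $I(w^2)-(Iw)^2$ is estimated in Fourier space: the multiplier is $m(n_1+n_2)-m(n_1)m(n_2)$, which vanishes unless some $|n_j|\gtrsim N$. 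This is the standard $I$-method commutator estimate \eqref{commest}, $\les N^{-\be}\EE^{3/2}$ with $\be>0$. We would aim to verify $\be$ close to $\frac32$ (possibly $\be=1+s-$) by the usual case analysis on the relative sizes of $n_1,n_2$, with $H^1\subset L^\infty$ on the low factor.
\item For the term with $\P_{>M}u_0$, we use $M>2N$. Then $I\P_{>M}u_0$ is controlled via \eqref{Ihigh} by $N^{1-s}M^{-\frac12-\al+s+}L$. We bound $\int \dx Iw\, I(\P_{>M}u_0\, w)$ by moving $I$ and using duality in $H^{1}$–$H^{-1}$: $\|I(zw)\|_{H^{0}}$-type bounds with $w$ estimated through $Iw\in H^1$ (Lemma~\ref{LEM:negprod} if needed). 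This gives $\les N^{1-s}M^{-\frac12-\al+s+}L\,\EE$.
\item For the term with $v_M$, we get $\les \|v_M\|_{L^2}\EE\les M^{-2\al+\s+}\EE$.
\end{itemize}

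Collecting these, we obtain \eqref{schematicODE3}:
\[
\EE'\le C N^{-\be}\EE^{3/2}+\delta\,\EE,\qquad \delta:=C L\big[N^{1-s}M^{-\frac12-\al+s+}+M^{-2\al+\s+}\big].
\]
We close with a bootstrap. Suppose $\EE\le 4\EE(0)$ on $[0,t]$, $t\le T$. Then
\[
\EE'\le \big(2CN^{-\be}\EE(0)^{1/2}+\delta\big)\EE,
\]
and Gronwall gives $\EE(t)\le \EE(0)e^{T(2CN^{-\be}\EE(0)^{1/2}+\delta)}\le 2\EE(0)$, provided
\[
T\delta\ll1 \qquad\text{and}\qquad T N^{-\be}\EE(0)^{1/2}\ll1 .
\]
This closes the bootstrap.

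The main obstacle is the exponent bookkeeping: we must choose $k$ so that both conditions hold as $N\to\infty$ with $T$ fixed. The first condition needs $(1-s)<k(\frac12+\al-s)$, which holds for $k$ large enough. The second needs
\[
(1-s)+k\big(s+\tfrac12-\al\big)<\be ,
\]
which requires $k$ small. Using $\be=\frac32$ and the limit $k\downarrow (1-s)/(\frac12+\al-s)$, the compatibility of the two conditions reduces to an inequality in $s,\al$ that should be exactly $s<3\al-\frac12$. If the sharp $\be$ is instead $1+s$ or similar, we would recheck this computation, and it is where we expect to have to work hardest.

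The resulting bound is $\EE\le 2\EE(0)$, which depends on $N=N(T,L)$, hence on $(T,\e)$. With $M_1=N^k$ this yields \eqref{bdvk} with $C(T,\e)$ given by $\EE(0)^{1/2}$.
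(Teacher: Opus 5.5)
The overall scheme matches the paper's, and your bootstrap variant is valid. The genuine gap is the cubic commutator estimate: the argument needs its exponent to be exactly $\be=\frac32$ for $s<\frac12$, and the proposal does not supply this.

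\textbf{What you have right.} The event, the modified energy $\frac12\|Iw_M\|_{H^1}^2$, the three-term split and the choice $M=N^k$ are the paper's. Your bootstrap $\EE\le 4\EE(0)$ is a legitimate substitute for the paper's threshold $E(Iw_M)^{\frac12}\le 4BA^{-1}$. Your two requirements are $k>\frac{1-s}{\frac12+\al-s}$ and $(1-s)+k(s+\frac12-\al)<\be$. They are compatible if and only if $\be(\frac12+\al-s)>1-s$. This is exactly the condition the paper reaches, where \eqref{cond1} forces $k<\be$. For $\be=\frac32$ it is precisely $s<3\al-\frac12$.

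\textbf{Why $\be=\frac32$ is essential.} For $\al<\frac12$ the threshold $\frac{1-s}{\frac12+\al-s}$ equals $\frac32$ exactly at $s=3\al-\frac12$, so any fixed loss in $\be$ shrinks the stated range. For $\al\le\frac13$ every admissible $s$ lies below $\frac12$, where the older commutator bound of \cite{FBBM} (which needs $s>\frac12$) is unavailable. Your fallback $\be=1+s$ turns compatibility into $s^2-(\frac12+\al)s+\frac12-\al<0$. This has no solution at all once $\al<2\sqrt2-\frac52\approx 0.33$. So the missing ingredient is Lemma~\ref{LEM:comm}: $\big|\int\dx Iw\,[I(w^2)-(Iw)^2]\,dx\big|\les N^{-\frac32}\|Iw\|_{H^1}^3$, uniformly in $0\le s\le1$.

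\textbf{How the commutator bound is obtained.} It does not come from $H^1\subset L^\infty$ on the low factor. After symmetrising, restrict to $|n_3|\le|n_2|\le|n_1|$ with $|n_1|>N$ and $|n_2|\ges N$. There the multiplier satisfies $|M(n_1,n_2,n_3)|\les|n_3|m^2(n_3)$, which absorbs the entire $\jb{n_3}$ weight of the lowest factor. Set $y(n)=\jb{n}m(n)|\ft w(n)|$ and use $(\jb{n_j}m(n_j))^{-1}\les N^{-(1-s)}\jb{n_j}^{-s}$ for $j=1,2$. Then split according to the lowest frequency.
In the region $|n_3|<N$ you get $N^{-2}\big(\sum_{|n_3|<N}y(n_3)\big)\|y\|_{\ell^2}^2\les N^{-\frac32}\|y\|_{\ell^2}^3$, by Cauchy--Schwarz over the box.
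In the region $|n_3|>N$ you get $N^{-1+s}\sum_{|n_3|>N}\jb{n_3}^{-1-s}y(n_3)\,\|y\|_{\ell^2}^2\les N^{-\frac32}\|y\|_{\ell^2}^3$.

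\textbf{Secondary point on the $\P_{>M}u_0$ term.} Your one-term bound for this contribution is not justified. An estimate of the form $\|I(zw)\|_{L^2}\les\|Iz\|_{L^2}\|Iw\|_{H^1}$ is false for $s<\frac12$. Take $z=\cos((K-1)x)$ and $w=\cos(Kx)$ with $K\to\infty$: the left side stays $\sim1$, while the right side is $\sim N^{2-2s}K^{2s-1}\to0$. Lemma~\ref{LEM:comm2} with $\g=\frac12-\al+$ handles these high--high$\to$low interactions. The cost is an extra term $N^{-\al+}L$ in your $\delta$, and this is where $\frac12-\al<s<\frac12+\al$ is used. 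The extra term is harmless for $T\delta\ll1$, but it must be included.
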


Note that strictly speaking  for the arguments in this section to be completely rigorous one needs to mollify the random initial data $u_0$ and consider mollified versions of $v_M$ and $w_{M}$ to justify differentiations and using the equations pointwise. In order to not obscure the main ideas further, we omit such arguments and formally derive an a-priori bound for $w_{M}$. For further details on such a rigorous procedure, see \cite[Section 5]{FBBM}.

To obtain the bound \eqref{bdvk}, we will apply the $I$-method in this probabilistic context, which we now describe. 
Given $N\geq 1$, let $I_{N}=I$ be the Fourier multiplier operator defined by $\widehat{If}(n)=m_{N}(n)\widehat{f}(n),$ where $m_{N}$ is defined in \eqref{Imult}. The operator $I$ is smoothing of order $(1-s)$ and satisfies 
\begin{equation}
\|f\|_{H^{s}}\lesssim \|If\|_{H^{1}} \les N^{1-s}\|f\|_{H^{s}}. \label{hstoIh1}
\end{equation}
Thus, to obtain \eqref{bdvk}, it suffices to obtain a bound on $Iw_{M}$ in $H^{1}$ on the whole interval $[0,T]$.

We first control the random initial data.
\begin{lemma}\label{lemma:izintmoment}  
Let $\frac 14<\al\leq \frac 12$, $M,N \in \N$ be such that $M>2N$, and $s<\al+\frac 12$. Then, 
\begin{align}
\| I\P_{>M} u_0 \|_{L^2}  &\les N^{1-s} M^{-\frac 12 -\al+s+} \|u_0\|_{H^{\al-\frac 12-}}, \label{>Mdata} \\
\| I\P_{\leq M} u_0 \|_{H^1}& \les \big(  N^{\frac 32 -\al+}+ N^{1-s}M^{s+\frac 12-\al+}  \big)\|u_0\|_{H^{\al-\frac 12-}}. \label{<Mdata}
\end{align} 
\end{lemma}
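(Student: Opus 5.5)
Both bounds are pure Fourier-multiplier estimates. I will work on the Fourier side using the explicit form of $m_N$ in \eqref{Imult}, comparing weights pointwise in $n$ against $\jb{n}^{\al-\frac12-}$. No probability is needed beyond the fact that $\|u_0\|_{H^{\al-\frac12-}}<\infty$, which holds almost surely by \eqref{regu0}.

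For \eqref{>Mdata}: since $M>2N$, every frequency $|n|>M$ satisfies $|n|>N$, so $m_N(n)=(N/|n|)^{1-s}$. Then
\begin{align*}
\| I\P_{>M} u_0 \|_{L^2}^2 = \sum_{|n|>M} N^{2(1-s)}|n|^{-2(1-s)} |\ft{u}_0(n)|^2 = N^{2(1-s)} \sum_{|n|>M} |n|^{-2(1-s)}\jb{n}^{-2(\al-\frac12-\delta)}\, \jb{n}^{2(\al-\frac12-\delta)}|\ft{u}_0(n)|^2 .
\end{align*}
The weight $|n|^{-2(1-s)-2(\al-\frac12-\delta)}=|n|^{2(s-\frac12-\al+\delta)}$ has negative exponent precisely because $s<\al+\frac12$ and $\delta$ is small. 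Hence it is maximized at $|n|\sim M$, giving $M^{2(-\frac12-\al+s+\delta)}$. Taking square roots yields \eqref{>Mdata}, where the $+$ in the exponent absorbs $\delta$.

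For \eqref{<Mdata}, I will split $\P_{\leq M}u_0=\P_{\leq N}u_0+\P_{N<|n|\leq M}u_0$.
\begin{itemize}
\item On $|n|\le N$, $m_N=1$, so the $H^1$ norm has weight $\jb{n}^{2}=\jb{n}^{2(\frac32-\al+\delta)}\jb{n}^{2(\al-\frac12-\delta)}$. The first factor is increasing in $n$ (as $\al\le\frac12<\frac32$), so it is bounded by $N^{2(\frac32-\al+\delta)}$. This gives the term $N^{\frac32-\al+}$.
\item On $N<|n|\le M$, the weight is $N^{2(1-s)}|n|^{-2(1-s)}\jb{n}^{2}\sim N^{2(1-s)}|n|^{2s}$. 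Writing $|n|^{2s}=|n|^{2(s+\frac12-\al+\delta)}\jb{n}^{2(\al-\frac12-\delta)}$, the first factor has positive exponent because $s>\al-\frac12$ (automatic here, since $s>\frac12-\al\ge \al-\frac12$ for $\al\le\frac12$; in any case $s\ge 0$ suffices). It is therefore maximized at $|n|\sim M$, giving $N^{1-s}M^{s+\frac12-\al+}$.
\end{itemize}
Summing the two pieces gives \eqref{<Mdata}.

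The only step needing care is the sign bookkeeping of the exponents, that is, checking where $s<\al+\frac12$ is used and that the middle-range weight is increasing. There is no real obstacle beyond this.
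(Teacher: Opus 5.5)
Your proposal is correct and follows essentially the same route as the paper: a pointwise Fourier-weight comparison for $I\P_{>M}u_0$ using $M>2N$, and the split $\P_{\le M}u_0=\P_{\le N}u_0+\P_{N<|n|\le M}u_0$, where the middle range carries the weight $N^{2(1-s)}|n|^{2s}$ and is bounded at $|n|\sim M$. Your check that $s+\frac12-\al>0$ in the middle range is a point the paper leaves implicit, and it holds in the paper's regime $s\ge 0$.
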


\begin{proof} 
These follow from \eqref{Imult}. Indeed, as $M>2N$, 
\begin{align*}
\| I\P_{>M} u_0 \|_{L^2} \les N^{1-s} \| \jb{\dx}^{-1+s}\jb{\dx}^{\frac 12-\al+} \P_{>M}u_0\|_{H^{\al-\frac 12-}} \les N^{1-s} M^{-\frac 12 -\al+s+} \|u_0\|_{H^{\al-\frac 12-}}.
\end{align*}
Similarly, for \eqref{<Mdata}, we split $u_0 = \P_{\leq N}u_0 + \P_{>N}u_0$ and use \eqref{hstoIh1}:
\begin{align*}
\| I\P_{\leq M} u_0 \|_{H^1}& \leq \| I\P_{\leq N} u_0 \|_{H^1} + \| I \P_{>N} \P_{\leq M} u_0 \|_{H^1} \\
& \les  N^{\frac 32-\al+} \|u_0\|_{H^{\al-\frac 12-}} + N^{1-s} \| \jb{\dx}^{s}\P_{\leq M}u_0\|_{L^2} \\
& \les N^{\frac 32-\al+} \|u_0\|_{H^{\al-\frac 12-}}+N^{1-s}M^{s+\frac 12-\al+} 
\|u_0\|_{H^{\al-\frac 12-}}. \qedhere
\end{align*} 
\end{proof}

Note that it is possible to handle the bounds \eqref{>Mdata} and \eqref{<Mdata} probabilistically, which also makes visible the logarithmic dependence if $\al=\frac 12$ and removes the $\eps$-losses otherwise. As the $\eps$-losses are harmless, we preferred the simpler pathwise argument above.

Applying the $I$-operator to \eqref{BBMwM} we see that $Iw_{M}$ satisfies 
\begin{align}
\begin{cases}
\partial_{t}Iw_{M} = -\tfrac{1}{2} \vp(\dx)\left[  I(w_M^{2})+2 I \big(  (\P_{>M}u_0 +v_M)w_M\big)    \right]\\
Iw_{M}|_{t = 0} = I \P_{\leq M}u_0.
\end{cases} 
\label{IwM}
\end{align}
We define the modified energy functional $E(Iw_{M})(t):=\frac{1}{2}\|Iw_{M}(t)\|_{H^1}^{2}$. Using \eqref{IwM}, we compute 
\begin{align*}
E(Iw_M)(t)-E(Iw_M)(0) 
& = \frac{1}{2}\int_{0}^{t}\int_{\T} (\partial_{x}Iw_M)[I(w_M^{2})-(Iw_{M})^{2}]\,dxdt' \tag{I} \\
& \,\,\,\,\,\,\,\,+\int_{0}^{t}\int_{\T} (\partial_{x}Iw_M)I\big(  (\P_{>M}u_0 +v_M)w_M\big)\,dxdt' \tag{II}.
\end{align*}
We now estimate (I) and (II), beginning with the main commutator estimate (I). Relative to \cite[Lemma 4.3]{FBBM}, we weaken the regularity assumption from $s>\frac 12$ to $s>0$ and obtain the exact exponent $\frac 32$ instead of $\frac 32-$. Note that if we left the restriction $s>\frac 12$, we would eventually need $3\al-\frac 12>\frac 12$, giving just $\al>\frac 13$.

\begin{lemma}\label{LEM:comm}
 Let $0\leq s\leq 1$ and $w\in H^{s}(\T)$. Then, we have
 \begin{align*}
\bigg \vert\int_{\T} (\partial_{x}Iw)[I(w^{2})-(Iw)^{2}]\,dx \bigg\vert\lesssim N^{-\frac{3}{2}}\|Iw\|_{H^{1}(\T)}^{3}.
\end{align*}
\end{lemma}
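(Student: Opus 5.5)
Pass to Fourier space. Write $\widehat{Iw}(n)=m(n)\widehat w(n)$. The integral becomes, up to a constant,
\[
\sum_{n_1+n_2+n_3=0} i n_1\, \widehat{Iw}(n_1)\,\widehat{Iw}(n_2)\widehat{Iw}(n_3)\Big[\frac{m(n_2+n_3)}{m(n_2)m(n_3)}-1\Big],
\]
where $n_1=-(n_2+n_3)$ and we used $m$ even. Set $f(n)=\jb{n}|\widehat{Iw}(n)|$, so that $\|f\|_{\l^2}\sim\|Iw\|_{H^1}$. The bound then reduces to a trilinear multiplier estimate
\[
\sum_{n_1+n_2+n_3=0}\frac{|n_1|\,|\sigma(n_2,n_3)|}{\jb{n_1}\jb{n_2}\jb{n_3}}\,f(n_1)f(n_2)f(n_3)\les N^{-\frac32}\|f\|_{\l^2}^3,
\qquad \sigma:=1-\frac{m(n_2+n_3)}{m(n_2)m(n_3)}.
\]
The factor $|n_1|/\jb{n_1}\le 1$ is harmless. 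So the task is to bound the symbol $\sigma/(\jb{n_2}\jb{n_3})$ and then sum using Cauchy--Schwarz or Young in $\l^2$.

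By symmetry assume $|n_2|\ge|n_3|$. If $|n_2|\le N$, then also $|n_2+n_3|\le 2N$. Because of the flat part of $m$ and $m\ge$ its value at $2N$, in fact $\sigma=0$ once $|n_2|,|n_3|,|n_2+n_3|\le N$. The boundary region $|n_2+n_3|\in(N,2N]$ cannot occur when $|n_2|\le N/2$, and otherwise it falls under the next case. So assume $|n_2|\gtrsim N$. I would split into two cases.

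\textbf{Case (a): $|n_3|\ll|n_2|$.} Use the mean value theorem: $|m(n_2+n_3)-m(n_2)|\les |n_3|\,m(n_2)/|n_2|$. If additionally $|n_3|\le N$, then $m(n_3)=1$ and $|\sigma|\les |n_3|/|n_2|$. The weight is then $\les |n_3|/(|n_2|^2\jb{n_3})\les N^{-2}$, but we need a summable bound. Instead write it as $\jb{n_2}^{-2}\cdot\mathbf 1_{|n_3|\le N}$. Summing $f(n_3)$ over $|n_3|\le N$ costs $N^{1/2}\|f\|$, and $\sum_{n_2} f(n_1)f(n_2)\jb{n_2}^{-2}$ with $n_1$ determined is $\les N^{-2}\|f\|^2$. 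This gives $N^{-3/2}$; it is exactly the source of the $\frac32$. If instead $|n_3|>N$, then $|\sigma|\les 1+ \frac{m(n_2+n_3)}{m(n_2)m(n_3)}\les 1+ m(n_3)^{-1}=1+(|n_3|/N)^{1-s}\les |n_3|/N$ since $s\ge0$. The weight is then $\les 1/(N|n_2|)\cdot |n_3|/\jb{n_3}$, which is bounded by $(N\jb{n_2}^{1/2}\jb{n_3}^{1/2})^{-1}$. Summing via Cauchy--Schwarz over $|n_2|\ge|n_3|\ge N$ gives $N^{-1}\cdot N^{-1/2}\cdot$ (a logarithm at worst). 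I would refine the split, for instance using $\jb{n_2}^{-1}\le \jb{n_2}^{-1/2-\delta}N^{-1/2+\delta}$ and $\sum_{|n_3|>N}\jb{n_3}^{-1-2\delta}$, so that no loss occurs.

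\textbf{Case (b): $|n_2|\sim|n_3|\gtrsim N$.} Here use the crude bound $|\sigma|\les \frac{m(n_2+n_3)}{m(n_2)m(n_3)}+1\les m(n_2)^{-1}\les (|n_2|/N)^{1-s}\le |n_2|/N$. It uses that $m(n_2+n_3)\le1$ and $m(n_3)\sim m(n_2)$. The weight is then $\les (N\jb{n_3})^{-1}\les N^{-1}\jb{n_2}^{-1/2}\jb{n_3}^{-1/2}$. This is summed as before, with $|n_2|,|n_3|\gtrsim N$ giving an extra $N^{-1/2}$, namely $\sum_{n_2,n_3}\jb{n_2}^{-1/2}\jb{n_3}^{-1/2}f(n_2)f(n_3)f(n_2+n_3)$.

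The main obstacle is obtaining exactly $N^{-3/2}$ without logarithmic losses in these sums. The trilinear sum with weight $\jb{n_2}^{-1/2}\jb{n_3}^{-1/2}$ is borderline: Cauchy--Schwarz in $n_3$ gives $\sum_{|n_3|\gtrsim N}\jb{n_3}^{-1}$, which diverges logarithmically. To avoid this, I would exploit the constraint $|n_2|\ge|n_3|$ in dyadic pieces, with $|n_3|\sim N_3$ and $|n_2|\sim N_2\ge N_3\gtrsim N$. Each piece is bounded by $N^{-1}(N_2N_3)^{-1/2}N_3^{1/2}\|f\|^3 = N^{-1}N_2^{-1/2}\|f\|^3$, where Cauchy--Schwarz over $n_3$ in a dyadic block gives $N_3^{1/2}$. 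The double dyadic sum $\sum_{N_3\le N_2}N_2^{-1/2}$ then still carries a $\log$. To remove it, I would keep the sharper symbol $|\sigma|\les (N_2/N)^{1-s}$ when $s>0$, or use orthogonality: localize $f(n_2)$ and $f(n_1)$ in the same dyadic block, and sum over blocks with Cauchy--Schwarz. This is the step I expect to require care.
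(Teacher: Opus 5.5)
You have a genuine gap in case (b); the unsymmetrised reduction and case (a) are essentially right, and the mean value cancellation for $|n_3|\le N$ is indeed where $N^{-3/2}$ comes from. Case (b) rests on a false symbol bound. From $m(n_2+n_3)\le 1$ and $m(n_3)\sim m(n_2)$ you only get $\frac{m(n_2+n_3)}{m(n_2)m(n_3)}\les m(n_2)^{-2}$, not $m(n_2)^{-1}$. The bound $m(n_2)^{-2}$ is actually attained in the high--high$\to$low interaction $|n_1|=|n_2+n_3|\le N\ll |n_2|\sim|n_3|$. For example, take $s=0$, $n_2=K$, $n_3=1-K$, $n_1=-1$ with $K\gg N$. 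Then $\sigma=1-K(K-1)/N^2\approx -K^2/N^2$, so your weight is $\approx N^{-2}$ rather than $\les (NK)^{-1}$; for any $s<1$ the discrepancy is a factor $(K/N)^{1-s}$. Once $m(n_2+n_3)$ is discarded, the best weight bound in case (b) is $\les N^{-2+2s}K^{-2s}$, which for $s=0$ does not decay at all. Summing $N^{-2}f(n_1)f(n_2)f(n_3)$ over unrestricted $n_1$ is not controlled by $\|f\|_{\ell^2}^3$, so case (b) does not close as written.

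The repair is to keep $m(n_1)$ and split on the output frequency $n_1$. Write $|\sigma|\le 1+\frac{m(n_1)}{m(n_2)m(n_3)}$ and, with $|n_2|\sim|n_3|\sim K\ges N$, use $\jb{n_j}m(n_j)\ges N^{1-s}K^{s}$ for $j=2,3$. The constant term gives a weight $\les \jb{n_3}^{-2}$, which is harmless. For the other term, if $|n_1|\le N$ the weight is $\les N^{-2+2s}K^{-2s}\les N^{-2}$, and summing $f(n_1)$ over $|n_1|\le N$ costs $N^{1/2}$. If $|n_1|>N$, then $m(n_1)=(N/|n_1|)^{1-s}$, and since $|n_1|\les K$ the weight is $\les N^{-1+s}|n_1|^{-1-s}$, with $\sum_{|n_1|>N}|n_1|^{-1-s}f(n_1)\les N^{-\frac12-s}\|f\|_{\ell^2}$. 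Both give $N^{-3/2}$, so your route works after this fix, at the cost of one more case than the paper. The paper symmetrises first: on $n_1+n_2+n_3=0$ the multiplier collapses to $\frac{i}{3}\sum_j n_j m(n_j)^2$, and the mean value theorem gives $|M|\les |n_{\min}|\,m(n_{\min})^2$. The derivative then always lands on the smallest frequency, and only $|n_{\min}|<N$ versus $|n_{\min}|>N$ need be treated.

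Finally, the logarithm you flag as the main obstacle does not occur. For $|n_2|\ge|n_3|>N$, use $\jb{n_2}^{-1}\le\jb{n_3}^{-1}$, then Cauchy--Schwarz in $n_2$, then $\sum_{|n_3|>N}\jb{n_3}^{-1}f(n_3)\les N^{-1/2}\|f\|_{\ell^2}$. Even your dyadic count converges, since $\sum_{N\les N_3\le N_2}N_2^{-1/2}\les N^{-1/2}$.
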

\begin{proof}
We recall the setup from the proof of \cite[Lemma 4.3]{FBBM}.
By Plancherel, we have 
\begin{multline*}
 \int_{\T} (\partial_{x}Iw)[I(w^{2})-(Iw)^{2}]\,dx  =\sum_{n_{1}+n_{2}+n_{3}=0}in_{3}m(n_{3})(m(n_{1}+n_{2})-m(n_{1})m(n_{2})) \prod_{j=1}^{3} \ft w(n_{j}).
\end{multline*}
We symmetrise the right-hand side to obtain $$ \sum_{n_{1}+n_{2}+n_{3}=0} M(n_{1},n_{2},n_{3}) \ft w (n_{1})\ft w (n_{2})\ft w(n_{3}),$$ where $M$ is the symmetric multiplier 
\begin{align*}
 M(n_{1},n_{2},n_{3})=\frac{i}{3} [ & n_{1}m(n_{1})(m(n_{2}+n_{3})-m(n_{2})m(n_{3})) \\ &+n_{2}m(n_{2})(m(n_{1}+n_{3})-m(n_{1})m(n_{3}))\\ &+n_{3}m(n_{3})(m(n_{1}+n_{2})-m(n_{1})m(n_{2}))].
\end{align*} 
By symmetry, we assume $|n_{3}|\leq |n_{2}|\leq |n_{1}|$. Furthermore, we assume $|n_{1}|>N$, since otherwise $m(n_{j})=1$ for all $j=1,2,3$, which implies $M(n_{1},n_{2},n_{3})=0$ on $n_{1}+n_{2}+n_{3}=0$. In addition, we also assume $|n_{2}|\gtrsim N$ since if $|n_{2}|\ll N$ we obtain a contradiction to the conditions $n_{1}+n_{2}+n_{3}=0$, $|n_{1}|>N$ and $|n_{3}|\leq |n_{2}|$. For shorthand, let us define 
\begin{multline*} 
\Lambda_{N}:=\{ (n_{1},n_{2},n_{3})\in \Z^{3}  :  n_{1}+n_{2}+n_{3}=0,\, |n_{3}|\leq |n_{2}|\leq |n_{1}|,\, |n_{1}|>N, \, |n_{2}|\gtrsim N \,\}.
\end{multline*}
Using the condition $n_{1}+n_{2}+n_{3}=0$, we see that
$$ M(n_{1},n_{2},n_{3})=\frac{i}{3}\left[n_{1}m^{2}(n_{1})+n_{2}m^{2}(n_{2})+n_{3}m^{2}(n_{3}) \right],$$ and hence on $\Lambda_{N}$, we have
\begin{equation}
|M(n_{1},n_{2},n_{3})|\lesssim |n_{3}|m^{2}(n_{3}). \label{Mbd}
\end{equation} 
Setting $ y(n)=\jb{n}m(n)|\ft w(n)|$ and using \eqref{Mbd}, we have thus reduced to showing 
\begin{equation} \label{v2com1}
\sum_{\Lambda_{N}}\frac{ m(n_3) }{\jb{n_{1}}\jb{n_{2}} m(n_{1}) m(n_{2})} \prod_{j=1}^{3} y(n_j) \lesssim N^{-\frac{3}{2}}\|y(n)\|_{\l_{n}^{2}}^{3}.
\end{equation} 
At this point, we diverge from the analysis in \cite[Lemma 4.3]{FBBM}.
As $|n_1|\geq |n_2| \ges N$, we have 
\begin{align*}
\text{LHS} \eqref{v2com1} \les N^{-2(1-s)} \sum_{\Ld_{N}}  \frac{ |m(n_3)|}{\jb{n_1}^{s}\jb{n_2}^{s}} \prod_{j=1}^{3} y(n_j).
\end{align*}
We split the above summation into two cases. First, if $|n_3|<N$, then, by Cauchy-Schwarz, this contribution is bounded by
\begin{align*}
N^{-2(1-s)} \sum_{ |n_3|<N} y(n_3)  \sum_{|n_2|>N} \frac{y(n_2)y(-n_2-n_3)}{\jb{n_2}^{2s}} \les N^{-2} \bigg(  \sum_{ |n_3|<N} y(n_3) \bigg) \|y\|_{\l^2}^{2} \les N^{-\frac 32} \|y\|_{\l^2}^{3}.
\end{align*}
Next, if $|n_3|>N$, this contribution is bounded by 
\begin{align*}
N^{-1+s} \sum_{\substack{|n_3|>N\\ \Ld_{N}}} \frac{1}{ \jb{n_1}^{s}\jb{n_2}^{s} \jb{n_3}^{1-s}} \prod_{j=1}^{3} y(n_j) \les N^{-1+s} \sum_{|n_3|>N} \frac{y(n_3)}{\jb{n_3}^{1+s}} \| y\|_{\l^2}^{2} \les N^{-\frac 32} \|y\|_{\l^2}^{3},
\end{align*}
which completes the proof.
\end{proof}

Next, we give an estimate to handle the products $I( \P_{>M}u_0  \cdot w_{M})$ and $I(v_{M} w_M)$.

\begin{lemma}\label{LEM:comm2} 
Let $0\leq \g <\frac 12$ and $\g\leq s \leq 1-\g$. Then, 
\begin{equation}
\| I( fg)\|_{L^2} \les   \big( \|If \|_{L^2} + N^{-\frac 12+\g} \|f\|_{H^{-\g}}\big) \|I g\|_{H^{1}}  , \label{vzcom}
\end{equation}
\end{lemma}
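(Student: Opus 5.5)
The strategy is to decompose $g$ in frequency relative to $N$, and in one piece also $f$, and to put the multiplier $m_N$ onto whichever factor carries the dominant frequency. We write $g=\P_{\le N}g+\P_{>N}g=:g_{lo}+g_{hi}$. We use throughout that $m$ is even and nonincreasing in $|n|$, with $m(n)\sim m(n')$ whenever $|n|\sim|n'|$, together with the identity
\begin{align*}
|\ft g(n)| = N^{-(1-s)}|n|^{-s}\,y(n)\quad (|n|>N),\qquad y(n):=\jb{n}m(n)|\ft g(n)|,\qquad \|y\|_{\l^2}\sim\|Ig\|_{H^1}.
\end{align*}

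\emph{Step 1: the term $I(fg_{lo})$.} Let $n=n_1+n_2$ with $|n_2|\le N$. If $|n_1|\gg N$, then $|n|\sim|n_1|$ and so $m(n)\sim m(n_1)$. If $|n_1|\lesssim N$, then $m(n_1)\sim 1\ge m(n)$. In either case $m(n)\lesssim m(n_1)$, so on the Fourier side $|\widehat{I(fg_{lo})}|\lesssim |\widehat{If}|*|\ft g_{lo}|$. We then bound this by $\|If\|_{L^2}\|g_{lo}\|_{L^\infty}$, arguing in physical space after replacing by positive Fourier coefficients. Finally $\|g_{lo}\|_{L^\infty}\lesssim\|\P_{\le N}Ig\|_{H^1}\le\|Ig\|_{H^1}$, since $I=\Id$ on $\P_{\le N}$.

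\emph{Step 2: $I(fg_{hi})$ with $|n_1|\gg|n_2|$.} Here $|n|\sim|n_1|$, so again $m(n)\sim m(n_1)$. Young's inequality gives the bound $\|If\|_{L^2}\|\ft g_{hi}\|_{\l^1}$, and by Cauchy--Schwarz
\begin{align*}
\|\ft g_{hi}\|_{\l^1}\le N^{-(1-s)}\Big(\sum_{|n|>N}|n|^{-2s}\Big)^{\frac12}\|y\|_{\l^2}\lesssim N^{-\frac12}\|Ig\|_{H^1},
\end{align*}
valid for $s>\frac12$. For $s\le\frac12$ we modify the argument as in Step 3 below.

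\emph{Step 3: $I(fg_{hi})$ with $|n_1|\lesssim|n_2|$, $|n_2|>N$.} This is the main obstacle, and it is where the $H^{-\g}$ norm of $f$ and the hypothesis $\g\le s\le 1-\g$ enter. Since $1-s\ge\g$, we have $m(n)\le (N/\jb{n})^{\g}$ for all $n$ in the relevant range. We first try
\begin{align*}
\|I(fg_{hi})\|_{L^2}\lesssim N^{\g}\|\jb{\dx}^{-\g}(fg_{hi})\|_{L^2}\lesssim N^{\g}\|f\|_{H^{-\g}}\|\jb{\dx}^{\g}g_{hi}\|_{L^{1/\g}},
\end{align*}
using Lemma~\ref{LEM:negprod} with $p=2$, $q=1/\g$, $r=2$ (for $\g=0$ one simply uses $L^\infty$). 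Sobolev embedding then gives $\|\jb{\dx}^{\g}g_{hi}\|_{L^{1/\g}}\lesssim\|g_{hi}\|_{H^{1/2}}$. In terms of $y$, this norm carries the weight $N^{-(1-s)}|n|^{\frac12-s}$, which is $\lesssim N^{-\frac12}$ on $|n|>N$ precisely when $s\ge\frac12$. That yields the required factor $N^{-\frac12+\g}$.

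\emph{The case $s<\frac12$.} This is the delicate point. Here I would not expend all of the decay $m(n)\le(N/\jb n)^{1-s}$ as the factor $N^{\g}$. Instead I would rebalance the exponents in Lemma~\ref{LEM:negprod}. Concretely, I would use $\jb{\dx}^{-\g}$ on $f$ and measure $g_{hi}$ in $W^{\g,q}$ with $H^{\frac12-\g+\g}$-type embeddings, exploiting $\jb{n_1}^{\g}\lesssim|n_2|^{\g}$ and $s\ge\g$. The aim is that the $g_{hi}$ factor carries weight $N^{-(1-s)}|n_2|^{\g-s}\cdot|n_2|^{\frac12-\g}$. Any residual positive power of $|n_2|$ would be absorbed by the output multiplier $m(n)$. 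If the physical-space route does not close, the fallback is a direct Schur-test count on the frequency region $|n_1|\lesssim|n_2|$, $|n_2|>N$, dyadically localising $n_2\sim K\ge N$ and summing in $K$. I expect this bookkeeping, rather than Steps 1--2, to determine whether the stated range of $s$ is exactly what is needed.
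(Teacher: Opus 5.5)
You have a genuine gap: your argument only works for $s\ge\frac12$ (with $s>\frac12$ when $\g=0$). In that range Step~1, together with Step~3 applied to all of $fg_{hi}$, already gives the lemma.

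The case $s<\frac12$, which you leave open, is the one that matters. The lemma is applied with $\frac12-\al<s<3\al-\frac12$, and $3\al-\frac12<\frac12$ as soon as $\al<\frac13$, so this case is what carries the result down to $\al>\frac14$.

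The rebalancing you sketch cannot close it. Like Step~2, which needs $\|\ft g_{hi}\|_{\l^1}$, it measures $g_{hi}$ at the scaling of $L^\infty$. It still puts the weight $N^{-(1-s)}|n_2|^{\frac12-s}$ on $g_{hi}$, and this weight is unbounded on $|n_2|>N$ when $s<\frac12$. For instance, with $\g=0$ your Step~3 amounts to $\|g_{hi}\|_{L^\infty}\les N^{-\frac12}\|Ig\|_{H^1}$, which is false for every $s\le\frac12$.

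Your plan to absorb the leftover power into the output multiplier also fails, in the high$\times$high$\to$low interaction $|n|\ll|n_1|\sim|n_2|$. There $m(n)$ depends only on the small output frequency (it equals $1$ when $|n|\le N$), so it cannot absorb any power of $|n_2|$. The Schur-type count you mention as a fallback is exactly the missing content of the proof.

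The idea that handles this, and the one the paper uses, is to apply Cauchy--Schwarz in the $g$-frequency $k$ before any summation, keeping $y(k)=\jb{k}m(k)|\ft g(k)|$ in $\l^2$:
\begin{align*}
\|I(fg)\|_{L^2}\le\bigg(\sum_{n,k}\frac{m(n)^2}{m(k)^2\jb{k}^2}\,|\ft f(n-k)|^2\bigg)^{\frac12}\|Ig\|_{H^1}.
\end{align*}
Now only the squared multiplier is ever summed, and $g$ never has to be placed in $\l^1$ or $L^\infty$.

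Take the region that defeats your argument, $|n|\les N\ll|k|$, so that $|n-k|\sim|k|$. There the weight against $|\jb{n-k}^{-\g}\ft f(n-k)|^2$ is $\les N^{-2(1-s)}|k|^{2\g-2s}\le N^{-2+2\g}$, using $s\ge\g$. Summing over the $O(N)$ outputs $|n|\les N$ gives $N^{-1+2\g}$.

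The other regions are routine:
\begin{itemize}
\item For $|k|\les N$ one has $m(n)\les m(n-k)$, and the sum lands on $\|If\|_{L^2}$ with the summable weight $\jb{k}^{-2}$.
\item For $|n|,|k|\gg N$ with $|k|\les|n|$, the weight is $\les\jb{k}^{-2+2\g}$ (using $s\le 1-\g$), which sums to $N^{-1+2\g}$.
\item For $|n|,|k|\gg N$ with $|k|\gg|n|$, the weight is $\les\jb{n}^{-2+2\g}$ (using $s\ge\g$), which also sums to $N^{-1+2\g}$.
\end{itemize}
This covers the full range $\g\le s\le 1-\g$, including the endpoint $\g=0$, $s=\frac12$ that your $L^\infty$ bound misses.
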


\begin{proof}
By Plancherel and Cauchy-Schwarz, we have 
\begin{align*}
\| I( fg)\|_{L^2} &= \bigg\| \sum_{k}  \frac{m(n)}{m(k)\jb{k}} \ft f(n-k)  \jb{k}\ft{ Ig}(k) \bigg\|_{\l^2_n} \\
& \les  \bigg( \sum_{n,k}  \frac{m(n)^2}{m(k)^2\jb{k}^2}| \ft f(n-k)|^2 \bigg)^{\frac 12} \|Ig\|_{H^1} = :M_{N}(f)^{\frac 12} \|Ig\|_{H^1}.
\end{align*}
It remains to show that, under the conditions on $(\g,s)$ in the statement, that
\begin{align}
M_{N}(f) \les \|I f\|_{L^2}^2 + (N^{-\frac 12+\g} \|f\|_{H^{-\g}})^2. 
\label{MNest}
\end{align}
We consider a number of cases.

Suppose first that $|k|\ll |n|$. In this case, $|n|\sim |n-k|$ and so $m(n)^{2} \les m(n-k)^2$.
If additionally $|k|\les N$, then we have 
\begin{align*}
\sum_{|k|\les N } \frac{1}{m(k)^{2} \jb{k}^2} \sum_{n} | \ft{ I f}(n-k)|^{2} \les  \|If\|_{L^2}^2,
\end{align*}
uniformly in $N$. If instead $|k|\gg N$, then we have 
\begin{align*}
\sum_{ N\ll |k| \ll |n|}  \frac{m(n)^2}{m(k)^2\jb{k}^2}| \ft f(n-k)|^2 & \les \sum_{ N\ll |k| \ll |n|} \frac{ \jb{n-k}^{2\g}}{\jb{n}^{2-2s} \jb{k}^{2s}}| \jb{n-k}^{-\g}\ft f(n-k)|^2  \\
& \les \sum_{|k|\gg N} \frac{1}{ \jb{k}^{2-2\g}} \| \jb{n}^{-\g} \ft f (n)  \|_{\l^2_n} \les N^{-1+2\g} \|f\|_{H^{-\g}},
\end{align*}
where in the second inequality we used that $\g \leq 1-s$ and in the last inequality, we used $\g<\frac 12$.

Suppose now that $|k|\sim |n|$. Then, $ (\frac{m(n)}{m(k)\jb{k}})^2  \les \jb{k}^{-2}$ which is $\l^1$-summable. If $|n|\les N$, then we use $|k-n|\les N$ so $m(n-k)\sim 1$ which can be added to $f$ for free. Otherwise, $|n| \gg N$ and we bound instead by 
\begin{align*}
\sum_{|k|\gg N} \frac{1}{ \jb{k}^{2-2\g}} \sum_{n} | \jb{n-k}^{-\g}\ft f(n-k)|^2 \les N^{-1+2\g} \|f\|_{H^{-\g}}
\end{align*}
 for $\g<\frac 12$.
 
 Lastly, we suppose that $|k|\gg |n|$ and so $|k|\sim |n-k|$. If $|n|\les N$, then we have the multiplier 
 \begin{align*}
\frac{\jb{n-k}^{2\g}}{ m(k)^{2} \jb{k}^{2}} \les N^{-2(1-s)} \frac{\jb{k}^{2\g}}{\jb{k}^{2s}} \les N^{-2(1-s)} \frac{1}{\jb{k}^{2s-2\g}} \les N^{-2(1-s)} \frac{1}{\jb{n}^{2s-2\g}} 
\end{align*}
where we used $s\geq \g$. Then, we sum $|\jb{n-k}^{-\g}\ft f(n-k)|^2$ over $k$ and are left with 
\begin{align*}
\sum_{|n|\les N} N^{-2(1-s)} \frac{1}{\jb{n}^{2s-2\g}} \les N^{-1+2\g}.
\end{align*}
Lastly, if $|n|\gg N$, then 
\begin{align*}
\frac{ m(n)^2 \jb{n-k}^{2\g} }{ m(k)^{2} \jb{k}^{2}} \les \frac{1}{\jb{n}^{2-2s} \jb{k}^{2s-2\g} } \les \frac{1}{\jb{n}^{2-2\g} } 
\end{align*}
and summing over $|n|\gg N$ provides a factor $N^{-1+2\g}$. 
This completes the proof of \eqref{MNest}.
\end{proof}

\begin{proof}[Proof of Proposition~\ref{prop:bdk}] Fix $T,\e>0$.
Let $L>0$,  $\frac 14<\al \leq \frac 12$ and let $(\s,s)$ be as in Proposition~\ref{PROP:randomHIGH} and Lemma~\ref{LEM:wM}. We define
\begin{align*}
\O_{L,T,\al}& = \Sigma_{L,\s,s,T}^{(3)}\cap  \{  \|u_0\|_{H^{\al-\frac 12-}} \leq L \}.
\end{align*}
Then, Proposition~\ref{prop:stochobjects} and Lemma~\ref{LEM:wM} imply
\begin{align}
\prob( \O_{L,T,\al,\e}^{c}) < C'e^{-CL }. \label{smallset}
\end{align}
We fix $\o \in \O_{L,T,\al,\e}$ which, from Lemma~\ref{LEM:wM}, guarantees the existence of $M_0\in 2^{\N}$ such that for all $M\geq M_0$ dyadic, we have $v_{M} \in C([0,2T]; H^{\s}(\T))$ solving  \eqref{BBMvM} on $[0,2T]$ and satisfying \eqref{vMdecay}  and $w_{M} \in C([0,T_{\max}); H^{s}(\T))$ solving \eqref{BBMwM} on some random almost surely positive time interval $[0,T_{\max})$.   

For $N=N(L,T)>0$ to be determined later, we fix $M> \max(2N, M_0)$ also to be determined later.
Combining Lemma~\ref{LEM:comm}, Lemma~\ref{LEM:comm2} with $\g=\frac 12-\al-$ which needs $\frac 12 -\al < s<\frac 12 +\al$ and $\g=0$ (for $v_M$), \eqref{>Mdata}, and \eqref{vMdecay}, we have shown the following estimate:
\begin{align}
E(Iw_{M})(t) \leq & E(Iw_{M})(0)+C_{s} N^{-\frac 32}\int_{0}^{t}E(Iw_{M})^{\frac{3}{2}}(t')dt' \notag \\
& +  C_{s}\big[   M^{-2\al+\s+} + ( N^{1-s}M^{-\frac 12-\al+s+}+N^{-\al-}) \|u_0\|_{H^{\al-\frac 12-}}   \big] \int_{0}^{t}   E(Iw_{M})(t')dt'. \notag\\
 \leq & E(Iw_{M})(0)+C_{s} N^{-\frac 32}\int_{0}^{t}E(Iw_{M})^{\frac{3}{2}}(t')dt'  \notag \\
& +  C_{s}\big[   M^{-2\al+\s+} + ( N^{1-s}M^{-\frac 12-\al+s+}+N^{-\al-}) L   \big] \int_{0}^{t}   E(Iw_{M})(t')dt'.  \label{energy}
\end{align}
To simplify the notation, we define 
\begin{align}
A : =  N^{-\frac 32} \quad \text{and} \quad B: = M^{-2\al+\s+} + ( N^{1-s}M^{-\frac 12-\al+s+}+N^{-\al-}) L  . \label{AB}
\end{align}
 Let
\begin{align*}
\cj{T} =\sup \{ t\in [0,2T]\, :\, E(Iw_M)^{\frac 12}(t) \leq 4BA^{-1}  \},
\end{align*}
By enforcing that 
\begin{align}
 E(Iw_M)^{\frac 12}(0 ) \leq BA^{-1}, \label{cond1}
\end{align}
and using the continuity in time of $t\mapsto E(Iw_{M})(t)$, we see that $\cj{T}>0$. 

We claim that $\cj{T}\geq T$. If otherwise, we have that $\cj{T}<T$, then for all $t\in [0,\cj{T}]$, \eqref{energy} implies 
\begin{align*}
E(Iw_{M})(t) \leq E(Iw_{M})(0) + 5C_{s}B \int_{0}^{t} E(Iw_{M})(t')dt'.
\end{align*}
By Gronwall's inequality, we then obtain
\begin{align*}
E(Iw_{M})^{\frac 12}(t) \leq E(Iw_{M})^{\frac 12}(0)  e^{\frac 52 C_{s} Bt}.
\end{align*}
Continuity again implies that $E(Iw_{M})^{\frac 12}(\cj{T}) =4BA^{-1}$, and hence
\begin{align}
4BA^{-1} =E(Iw_{M})^{\frac 12}(\cj{T})\leq E(Iw_{M})^{\frac 12}(0)  e^{\frac 52 C_{s} B\cj{T}} \leq E(Iw_{M})^{\frac 12}(0)  e^{\frac 52 C_{s} BT}. \label{contradiction}
\end{align}
Enforcing the condition that
 \begin{align}
\tfrac{5}{2}C_{s} BT \leq \log 2 \label{cond2}
\end{align}
it then follows from \eqref{cond1} that \eqref{contradiction} yields a contradiction. We thus conclude that $\cj{T}\geq T$, \textit{provided} that both \eqref{cond1} and \eqref{cond2} hold true, which we verify now.
As $E(Iw_M)^{\frac 12}(0 )= \| I \P_{\leq M}u_0\|_{H^1}$,  \eqref{>Mdata},  \eqref{<Mdata}, \eqref{AB} show that \eqref{cond1} and \eqref{cond2} are implied by
\begin{align}
 \text{(i)}& \quad N^{\frac 32-\al+} +N^{1-s}M^{s+\frac 12-\al+} \ll N^{\frac 52-s}M^{-\frac 12-\al+s+} +N^{\frac 32}M^{-2\al+\s+} +N^{\frac 32-\al+} ,\label{Cond1}\\
  \text{(ii)}& \quad N^{1-s}M^{-\frac 12-\al+s} +M^{-2\al+\s+}+N^{-\al+} \ll L^{-1}T^{-1}. \label{Cond2}
\end{align}

Consider first \eqref{Cond1}. We have
\begin{align*}
N^{1-s}M^{s+\frac 12-\al+} \ll N^{\frac 52-s}M^{-\frac 12-\al+s+}  \quad \Longleftarrow \quad M\ll N^{\frac 32+}.
\end{align*}
Then, under $M\ll N^{\frac 32+}$, we find 
\begin{align*}
N^{\frac 32-\al}  \ll N^{\frac 52-s}M^{-\frac 12-\al+s+}
\end{align*}
since $s>0$ and $\al\leq \frac 12$. Therefore, \eqref{Cond1} is satisfied provided that $M\ll N^{\frac 32+}$. We move onto \eqref{Cond2}. As $\s<2\al$, we may choose $M$ large enough so that $M^{2\al-\s-} \gg LT$.
Moreover, if we take $M=N^{k}$ for some $1<k<\frac 32$ to be determined, then 
\begin{align}
N^{1-s}M^{-\frac 12-\al+s} = N^{1-s-k(\frac 12 +\al-s)} \ll L^{-1}T^{-1} \quad \Longleftarrow \quad k>\frac{1-s}{\frac 12+\al-s}. \label{Cond3}
\end{align}
For the condition on $k$ in \eqref{Cond3} to be consistent with $k <\frac 32$ we need
\begin{align*}
\frac{1-s}{\frac 12+\al-s}< \frac 32 \quad \Longleftarrow \quad s<3\al -\frac 12.
\end{align*}
This condition on $s$ is consistent with $\frac 12-\al <s<\al+\frac 12$ provided that $\tfrac 12 -\al <3\al -\tfrac 12$
which forces $\al>\frac 14$.

 Now given any $\e>0$ sufficiently small, we may fix $\frac 14 <\al \leq \frac 12$, $L=\eps^{-1}$, $M=N^{\max(\frac{1-s}{\frac 12+\al-s},1)+}$ and  choose $N=N(\eps,T) \gg 1$ so that \eqref{Cond3} holds. 
 Note that $N\sim (1+\eps T)^{\ta_1}$ for some $\ta_1=\ta_1(\al,s,\s)>0$, namely, it depends polynomially on $T$. Then, by \eqref{smallset}, $\PP( \O^{c}_{\eps^{-1},T,\al,\eps}) <C' e^{-C\e^{-1}} <\e$, we have the bound
 \begin{align*}
\sup_{t\in [0,T]} E(Iw_{M})(t) \leq C(\eps)\jb{T}^{\ta_2}
\end{align*}
for some $\ta_2 >0$.
By \eqref{hstoIh1} and the definition of the modified energy $E(Iw_M)$ we obtain \eqref{bdvk}. 
Setting $\wt{\O}_{T,\eps} =\O_{\eps^{-1},T,\al,\eps}$ completes the proof of Proposition~\ref{prop:bdk}.
\end{proof}

Finally, we have:

\begin{proof}[Proof of Theorem~\ref{THM:ASGWP}] 
We point out that the event $\wt{\O}_{T,\eps}$ controls the forcings $\P_{>M}u_0$ and $v_{M}$ on all of $[0,T]$ and already has small complemental probability. As we then obtain a uniform bound on the growth of $w_{M}$ on $[0,T]$, we conclude that $T_{\max}^{\o}$ from Lemma~\ref{LEM:wM} satisfies $T_{\max}^{\o}\geq T$ for each fixed $\o\in \wt{\O}_{T,\eps}$. 
To establish the almost sure existence statement is a standard argument and one can see for instance \cite[Section 4.3]{FBBM} for details.
\end{proof}

\begin{ackno}\rm
J.F. was partially supported by the ARC project FT230100588.
\end{ackno}

\end{document}